\def\NAT@def@citea{\def\@citea{\NAT@separator}}
\theoremstyle{plain}
\newtheorem{theorem}{Theorem}[section]
\newtheorem{lemma}[theorem]{Lemma}
\newtheorem{assumption}{Assumption}
\theoremstyle{definition}
\newtheorem{definition}[theorem]{Definition}
\theoremstyle{remark}
\newtheorem{remark}{Remark}
\begin{document}


\title{A descent subgradient method using Mifflin's line search for nonsmooth nonconvex optimization}

\author{
\name{Morteza Maleknia\textsuperscript{a}\thanks{Email addresses: maleknia.morteza@gmail.com (M. Maleknia), m.soleimani.d@ut.ac.ir (M. Soleimani-damaneh) } and Majid Soleimani-damaneh\textsuperscript{a} \thanks{}}
\affil{\textsuperscript{a}School of Mathematics, Statistics and Computer Science, College of Science, University of
	Tehran. 
	}
}

\maketitle

\begin{abstract}
We propose a descent subgradient algorithm for minimizing a function $f:\mathbb{R}^n\to\mathbb{R}$, assumed to be locally Lipschitz, but not necessarily smooth or convex. To find an effective descent direction, the Goldstein $\varepsilon$-subdifferential is approximated through an iterative process. The method enjoys a new two-point variant of Mifflin's line search in which the subgradients are arbitrary.
Thus, the line search procedure is easy to implement. Moreover, in comparison to bundle methods, the quadratic subproblems have a simple structure, and to handle nonconvexity the proposed method requires no algorithmic modification.  We study the global convergence of the method and prove that any accumulation point of the generated sequence  is Clarke stationary, assuming that the objective $f$ is weakly upper semismooth. We illustrate the efficiency and effectiveness of the proposed algorithm on a collection of academic and semi-academic test problems.
\end{abstract}

\begin{keywords}
nonlinear optimization; nonsmooth optimization; nonconvex programming; subgradient
\end{keywords}

\begin{amscode}
	65K05, 65K10, 90C26
	 \end{amscode}

\section{Introduction}

In this study, we concentrate on the following unconstrained minimization problem:
\begin{equation}\label{Main-Problem}
	\min \,\, f(\bm x) \quad \text{s.t.} \quad \bm x\in\mathbb{R}^n,
\end{equation}
where $f:\mathbb{R}^n\to\mathbb{R}$ is locally Lipschitz, but not necessarily smooth or convex. Such
problems arise in many applied fields such as matrix algebra, approximation theory, optimal control, image processing, and data analysis \cite{Poly-App,Ahoo, Makela_book,Cond1,image}. Over more than four decades, a great deal of effort has gone into developing effective algorithms for solving problem~\eqref{Main-Problem}. In the following, we concisely review the most important relevant methodologies.
\subsection{Literature review}
Bundle methods, originally developed by Lemar\'echal  \cite{lemarchal-book1,lemarchal-book2,Lemar-bundle} and Wolfe  \cite{Cojugate-Wolf}, are of the most common tools for solving problem \eqref{Main-Problem}. A well-developed theoretical base and a nice practical performance make these  methods highly popular in nonsmooth optimization. Bundle methods store a number of previously computed trial points along with
the corresponding subgradients into a bundle of information. Using the elements of this bundle, a model function for the objective function is constructed. As a standard manner, by minimizing the model function, one can obtain a search direction. Next, a line search procedure finds the next trial point, and the bundle of information is
updated accordingly. The aggregation strategy proposed by Kiwiel  \cite{Aggregate-sub} was an important contribution to the field in resolving some difficulties with the amount of required storage. One can point to the proximal bundle method \cite{Proximal-Bundle,BT-method} as one of the most efficient variants of the bundle methods. These methods keep the model function local enough by means of a proximity parameter. Variable metric bundle methods \cite{Limited-bundle,v-metric1,v-metric33,v-metric} employ quasi-Newton techniques to augment the model function with an approximation of the Hessian matrix. Moreover, some recent variants of the bundle methods that deal with approximate subgradients can be found in \cite{inexactBundle3,inexactBundle1,inexactBundle2,Hosseini1}. For more recent developments in bundle methods and their applications one can refer to \cite{Saga1,Noll1,Noll2,Gaudioso,Gaudiso2,Saga2}. One drawback of bundle methods is that their generalization from convex to nonconvex case requires serious algorithmic modifications, which leads to a much less satisfactory numerical performance.

In 2002, Burke et al. \cite{GS-performance} initiated a giant stride towards approximating the subdifferential set by sampling gradients. The results of that work led to proposing an implementable algorithm, namely Gradient Sampling (GS) \cite{Burke2005}. This method approximates the $\varepsilon$-steepest descent direction to obtain a search direction during each iteration. Then, it employs a standard backtracking Armijo line search to find a suitable step size. A work of Kiwiel \cite{Kiwiel2007}  improved the convergence results of the original GS method. An extension of the method for solving constrained problems was presented in  \cite{Curtis2012}. A specific variant of the GS approach for solving min-max problems was appeared in \cite{Fast-GS}. Although the original GS approach is robust, it requires $m>n$ gradient evaluations during each iteration, which makes the method computationally expensive. To tackle this difficulty, some variants of the GS method with the aim of reducing the number of gradient evaluations were developed in \cite{Curtis2013,Maleknia-Coap,Maleknia-Oms}. Moreover, some special types of the GS approach, that inexactly solve the corresponding quadratic subproblems, can be found in \cite{Curtis-QP,Maleknia-jota}.

As another class of methods that can deal with problem \eqref{Main-Problem}, one can point to the subgradient methods originally proposed by Shor  \cite{shorbook}. The initial subgradient method has a very simple structure as any direction  opposite to an arbitrary subgradient can be employed as a search direction, not to mention making use of an off-line sequence of step sizes. However, the approach suffers from several limitations, including poor speed of convergence, lack of descent, deficiency of a practical stopping criterion based on first-order optimality conditions, and limited convergence results for nonconvex objectives. To boost the convergence speed, a subgradient method with space dilation was suggested in \cite{Space-dilation1}. Based on the space dilation operator, Shor developed another variant of subgradient methods, namely $r$-algorithm \cite{shorbook,r-algorithm}. One can consider these modified subgradient methods as variable metric methods which do not satisfy the secant equation. Besides a limited theoretical foundation, these approaches do not have a stopping criterion based on a necessary optimality condition. Moreover, the amount of required storage for storing the corresponding operators poses some difficulties with medium and large-scale problems. Owing to some features of bundle methods, Bagirov et al. \cite{bagirov2010,bagirov2012} proposed a descent subgradient algorithm for solving problem \eqref{Main-Problem}. Their approach is interesting as it enjoys a practical stopping criterion and, unlike bundle methods, it requires no algorithmic modifications to handle nonconvexity. However, the user has to supply those subgradients which approximately satisfy the conditions in the Lebourg's mean value theorem \cite{Clarke1990}, namely quasi-secants. In fact, Bagirov et al.'s method does not work with arbitrary subgradients. Another descent subgradient algorithm in which the subgradients are not arbitrary can be found in \cite{m-amiri}.

\subsection{The proposed method}
In this study, we propose a descent subgradient algorithm for solving problem~\eqref{Main-Problem}. By Rademacher's theorem \cite{Evans2015}, we know that the locally Lipschitz function $f:\mathbb{R}^n\to\mathbb{R}$ is differentiable almost everywhere on $\mathbb{R}^n$. Moreover, in many practical situations, locally Lipschitz functions are continuously differentiable (smooth) almost everywhere on $\mathbb{R}^n$. Indeed, while minimizing a locally Lipschitz function over $\mathbb{R}^n$ using a machine which uses \textbf{IEEE} double or single precision arithmetic, a nonsmooth point is never encountered except in trivial or pathological cases. In this regard, the Clarke subdifferential merely suggests the traditional steepest descent direction as a search direction, which is not an effective descent direction in nonsmooth optimization \cite{Asl,Burke2021}. To avoid this issue, we employ the Goldstein $\varepsilon$-subdifferential  \cite{Gold1} which stabilizes our choice of the search direction. More precisely, our main idea is to develop an iterative procedure to approximate the Goldstein $\varepsilon$-subdifferential, which leads to an estimation of the $\varepsilon$-steepest descent direction.

The heart of the proposed method is a new two-point variant of the Mifflin's line search whose finite convergence is guaranteed under the  assumption that the objective $f$ is weakly upper semismooth. Thanks to the proposed line search, our algorithm works with arbitrary subgradients, which is not the case in \cite{bagirov2012} and \cite{m-amiri}. As opposed to bundle methods, the proposed method requires no algorithmic and parametric modifications to handle nonconvexity. In addition, the structure of the quadratic subproblems is simpler than the bundle type methods. In contrast with original GS method, our approximation of the Goldstein $\varepsilon$-subdifferential is improved sequentially, and hence the proposed approach needs fewer subgradient evaluations than the original GS method. To control the size of quadratic subproblems, the user can optionally employ an adaptive subgradient selection strategy to discard almost redundant subgradients. 

We study the global convergence of the method and prove that any accumulation point of the generated sequence is Clarke stationary for objective $f$. By means of numerical experiments, we show the efficiency of the method in practice. To this end, first we consider a set of academic nonsmooth convex and nonconvex test problems to provide some comparative results. Next, we apply our method to a nonsmooth model arising in data clustering. In our third experiment, we consider the problem of Chebyshev approximation by polynomials. Finally, we turn to  the problem of minimizing eigenvalue products.

\subsection{Outline}
In Section \ref{Sec2}, we provide some required preliminaries. Section \ref{Sec3} describes the proposed approach for finding a descent direction. Approximate Clarke stationary points are computed in Section \ref{Sec4}, and a Clarke stationary point for objective function $f$ is obtained in Section \ref{Sec5}. Numerical results are reported  in Section \ref{Sec6}, and Section \ref{Sec7} concludes the paper.

\section{Preliminaries}\label{Sec2}
Throughout this paper, we use the following notations. The usual inner product in the Euclidean space $\mathbb{R}^n$  is denoted by $\bm x^T\bm y$, which induces the Euclidean norm $\lVert\bm x\rVert=(\bm x^T\bm x)^{1/2}$. An open ball with center $\bm x\in \mathbb{R}^n$ and radius $\varepsilon\geq0$ is denoted by $\mathcal B(\bm x, \varepsilon)$, that is,
$$\mathcal B(\bm x, \varepsilon):=\{\bm y\in\mathbb{R}^n \,\,:\,\, \lVert \bm y-\bm x\rVert< \varepsilon\}. $$
Moreover, $\mathbb{N}$ is the set of natural numbers, $\mathbb{N}_0:=\mathbb{N}\cup\{0\}$, and $\mathbb{R}_+:=(0,\infty).$

Suppose $f:\mathbb{R}^n\to\mathbb{R}$ is a locally Lipschitz function. Then, by Rademacher's theorem~\cite{Evans2015}, $f$ is differentiable almost everywhere on $\mathbb{R}^n$. Let
$$\Omega_f:= \{\bm x\in\mathbb{R}^n \,\, : \,\, f\,\, \text{is not differentiable at}\,\, \bm x  \}.$$
Then,  the Clarke subdifferential of $f$ at a point $\bm x\in\mathbb{R}^n$ is defined as \cite{Clarke1990}
\begin{equation*}
\partial f(\bm x):= \texttt{conv} \{\boldsymbol{\xi}\in\mathbb{R}^n \,\,:\,\, \exists \, \{\bm x_i \}\subset\mathbb{R}^n \setminus\Omega_f\,\,\,\, \text{s.t.} \,\,\,\, \bm x_i\to \bm x \,\, \text{and} \,\, \nabla f(\bm x_i)\to \boldsymbol{\xi} \},
\end{equation*}
where $\texttt{conv}$ denotes the convex hull operator. Furthermore, for any $\varepsilon\geq 0,$ the (Goldstein) $\varepsilon$-subdifferential of $f$ at a point $\bm x\in\mathbb{R}^n$ is the set  \cite{Bagirov2014}
$$\partial_\varepsilon f(\bm x):=\texttt {cl\,conv}\{\partial f(\bm y) \,\, : \,\, \bm y\in \mathcal{B}(\bm x, \varepsilon) \},$$
in which $\texttt{cl\,conv}$ is the closure of the convex hull. If $\varepsilon=0$, we have $\partial f(\bm x)=\partial_0 f(\bm x)$, for all $\bm x\in\mathbb{R}^n$. In addition, for any $\varepsilon\geq 0$ and $\bm x\in\mathbb{R}^n$, the set $\partial_\varepsilon f(\bm x)$ is a nonempty, convex and compact subset of $\mathbb{R}^n$. If $f$ is differentiable at $\bm x\in\mathbb{R}^n$, then $\nabla f(\bm x)\in\partial f(\bm x)$. Furthermore, If $f$ is smooth at $\bm x\in\mathbb{R}^n$, we have $\{\nabla f(\bm x)\}=\partial f(\bm x)$.  Also, the set-valued map $\partial_\varepsilon f:\mathbb{R}^n\rightrightarrows\mathbb{R}^n$ is locally bounded and upper semicontinuous \cite{Clarke1990}. It is recalled that for a point $\bm x\in\mathbb{R}^n$ to be a local minimizer of the locally Lipschitz function $f$, it is necessary that $\bm 0\in\partial f(\bm x)$. Such a point is called a \emph{Clarke stationary} point.

In the proposed method, the following concept of stationarity plays a crucial role.

\begin{definition}\label{Def1}
	Let $\bm x\in\mathbb{R}^n$, $\varepsilon>0$, and  $\delta>0$. Assume $\mathcal G_\varepsilon({\bm x})\subset \partial_\varepsilon f(\bm x)$ is a nonempty inner approximation of $\partial_\varepsilon f(\bm x)$.‌Then the point $\bm x\in\mathbb{R}^n$ is called a $(\delta, \mathcal G_\varepsilon({\bm x}))$-stationary point if
	$$\min\{\lVert \bm g\rVert \,\, : \,\, \bm g\in\text{\rm\texttt{conv}} \mathcal{G}_\varepsilon(\bm x)  \}\leq \delta. $$
\end{definition}

\section{In quest of a descent direction}\label{Sec3}
Our main idea to obtain a descent direction for the locally Lipschitz function $f:\mathbb{R}^n\to\mathbb{R}$ at a point $\bm x\in\mathbb{R}^n$ is to approximate the $\varepsilon$-steepest descent direction. In this respect, we concisely review the notions of steepest descent and  $\varepsilon$-steepest descent directions.

For the locally Lipschitz objective $f$, the steepest descent direction at a point $\bm x\in\mathbb{R}^n$ is obtained from the following minimization problem \cite{Bonnansbook,Burke2005}:
\begin{equation}\label{steepest-descent}
\min\{ \lVert \boldsymbol{\xi}\rVert \,\, : \,\,  \boldsymbol{\xi}\in\partial f(\bm x) \}.
\end{equation}
Let $\boldsymbol{\xi}^*\neq \bm 0$ be the optimal solution of problem \eqref{steepest-descent}. Then, the direction $\bar{\bm{d}}:=-\boldsymbol{\xi}^*/\lVert \boldsymbol{\xi}^*\rVert$ is called \emph{(normalized) steepest descent direction}. In many iterative algorithms for solving problem \eqref{Main-Problem}, we often land on a continuously differentiable point which is close by to the nonsmooth region $\Omega_f$. In this situation, $\partial f(\bm x)$ does not contain any information of the nearby nonsmooth region; in other words, $\partial f(\bm x)=\{\nabla f(\bm x) \}$. Thus, the steepest descent direction coincides with the direction $-\nabla f(\bm x)/\lVert \nabla f(\bm x) \rVert$, which is not an effective descent direction for nonsmooth function $f$. In contrast, in the same situation, the $\varepsilon$-subdifferential $\partial_\varepsilon f(\bm x)$ can capture some local information of the nearby nonsmooth region and provide an effective descent direction. If $\boldsymbol{\xi}^*\neq \bm 0$ solves the following minimization problem:
\begin{equation}\label{eps-steepest-descent}
\min\{ \lVert \boldsymbol{\xi} \rVert \,\, : \,\,   \boldsymbol{\xi}\in\partial_\varepsilon f(\bm x)\},
\end{equation}
we call the direction $\tilde{\bm{d}}:=-\boldsymbol{\xi}^*/\lVert \boldsymbol{\xi}^*\rVert$  \emph{(normalized) $\varepsilon$-steepest descent direction}, which is similar to those introduced in \cite{Burke2005,shorbook}. As observed, to solve problem \eqref{eps-steepest-descent}, we need to know the entire subdifferential on $\mathcal{B}(\bm x, \varepsilon)$, which is impractical in many real-life situations. In this regard, we develop an iterative procedure to efficiently  approximate $\partial_\varepsilon f(\bm x)$.

For a given point $\bm x\in\mathbb{R}^n$, and given scalars $m\in\mathbb{N}$ and $\varepsilon>0$, let
$$\mathcal G_\varepsilon(\bm x):=\{\boldsymbol{\xi}_1, \boldsymbol{\xi}_2,\ldots,\boldsymbol{\xi}_m\}\subset \partial_\varepsilon f(\bm x)$$
be a collection of subgradients. Then, we consider $$\texttt{conv}\, \mathcal G_\varepsilon(\bm x)\subset \partial_\varepsilon f(\bm x)$$
as an inner approximation of $\partial_\varepsilon f(\bm x)$, and solve the following minimization problem:
\begin{equation}\label{Approx-eps-d-d}
\min\{\lVert\bm g\rVert \,\, : \,\, \bm g\in \texttt{conv}\, \mathcal{G}_\varepsilon(\bm x)   \},
\end{equation}
which is a practical approximation of problem \eqref{eps-steepest-descent}. If $\bm g^*\neq \bm 0$ is the optimal solution of problem \eqref{Approx-eps-d-d}, the direction ${\bm d}:=-\bm g^*/\lVert \bm g^* \rVert$ is an approximation of $\tilde{\bm d}$.
In case $\texttt{conv} \mathcal{G}_\varepsilon(\bm x)$ is a good approximation of $\partial_\varepsilon f(\bm x)$, one can use the direction ${\bm d}$ to take a descent step, i.e., there exists the step length $t>0$ satisfying the following sufficient decrease condition:
\begin{equation}\label{Suff-decrease}
f(\bm x+t {\bm d})-f(\bm x)\leq -\beta t \lVert \bm g^* \rVert \quad \text{and} \quad t\geq \bar t,
\end{equation}
where $\beta\in(0,1)$ is a sufficient decrease parameter, and $\bar t>0$ is a lower bound for the step length $t$. Otherwise, the working set $\mathcal{G}_\varepsilon(\bm x)$ should be improved by appending a new element of $\partial_\varepsilon f(\bm x)$, namely $\boldsymbol{\xi}_{m+1}$. The new subgradient $\boldsymbol{\xi}_{m+1}\in \partial_\varepsilon f(\bm x)$ must be chosen such that
\begin{equation}\label{Criteria-0}
\boldsymbol{\xi}_{m+1}\notin\texttt{conv} \mathcal{G}_\varepsilon(\bm x).
\end{equation}
In this way, if we update $\mathcal{G}_\varepsilon(\bm x)$ by
$$\mathcal{G}^+_\varepsilon(\bm x):=\mathcal{G}_\varepsilon(\bm x)\cup \{\boldsymbol{\xi}_{m+1} \},$$
we have $\texttt{conv}\mathcal{G}_\varepsilon(\bm x)\subsetneq\texttt{conv}\mathcal{G}^+_\varepsilon(\bm x)$. In other words, our approximation of $\partial_\varepsilon f(\bm x)$ is improved significantly. The following lemma provides a useful criterion to find the new subgradient $\boldsymbol{\xi}_{m+1}\in \partial_\varepsilon f(\bm x)$ which satisfies condition \eqref{Criteria-0}.
\begin{lemma}
	Let $\bm g^*\neq \bm 0$ be the optimal solution of problem \eqref{Approx-eps-d-d}, and ${\bm d}=-\bm g^*/\lVert \bm g^* \rVert$. For a $\beta\in(0,1)$ and $\boldsymbol{\xi}_{m+1}\in\partial_\varepsilon f(\bm x)$, assume
	\begin{equation}\label{Criterion1}
	\boldsymbol{\xi}_{m+1}^T {\bm d} \geq -\beta \lVert \bm g^*\rVert.
	\end{equation}
	Then $\boldsymbol{\xi}_{m+1}\notin\text{\rm\texttt{conv}} \mathcal{G}_\varepsilon(\bm x)$.
\end{lemma}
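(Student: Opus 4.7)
The plan is to exploit the first-order optimality condition for the minimum-norm problem \eqref{Approx-eps-d-d} and then derive a contradiction by comparison with the hypothesis \eqref{Criterion1}.

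First I would observe that $\texttt{conv}\,\mathcal{G}_\varepsilon(\bm x)$ is a nonempty closed convex subset of $\mathbb{R}^n$ (finite convex hull), so $\bm g^*$ is the projection of the origin onto this set. The standard variational characterization of projection then gives
\begin{equation*}
(\bm g - \bm g^*)^T \bm g^* \geq 0 \qquad \text{for every } \bm g \in \texttt{conv}\,\mathcal{G}_\varepsilon(\bm x),
\end{equation*}
or equivalently $\bm g^T \bm g^* \geq \lVert \bm g^* \rVert^2$. Dividing by $\lVert \bm g^*\rVert > 0$ and recalling that $\bm d = -\bm g^*/\lVert \bm g^*\rVert$, this rewrites as
\begin{equation*}
\bm g^T \bm d \leq -\lVert \bm g^* \rVert \qquad \text{for every } \bm g \in \texttt{conv}\,\mathcal{G}_\varepsilon(\bm x).
\end{equation*}

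Second, I would suppose, for the sake of contradiction, that $\boldsymbol{\xi}_{m+1} \in \texttt{conv}\,\mathcal{G}_\varepsilon(\bm x)$. Then the inequality above applied to $\bm g = \boldsymbol{\xi}_{m+1}$ yields $\boldsymbol{\xi}_{m+1}^T \bm d \leq -\lVert \bm g^* \rVert$. Combined with the hypothesis \eqref{Criterion1}, this gives $-\lVert \bm g^*\rVert \geq \boldsymbol{\xi}_{m+1}^T \bm d \geq -\beta \lVert \bm g^* \rVert$, i.e. $(1-\beta)\lVert\bm g^*\rVert \leq 0$. Since $\beta \in (0,1)$ and $\bm g^* \neq \bm 0$, this is impossible, yielding the desired contradiction.

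There is no serious obstacle here; the only point that needs a moment's care is making sure the normalization is handled correctly and that the strict inequality $\beta < 1$ (rather than $\beta \leq 1$) is what drives the contradiction. The whole argument is essentially the observation that a separating hyperplane at the minimum-norm point of a convex set puts the entire set on one side, while the criterion \eqref{Criterion1} forces $\boldsymbol{\xi}_{m+1}$ onto the opposite side.
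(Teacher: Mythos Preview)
Your proposal is correct and follows essentially the same approach as the paper: the paper phrases the optimality condition as $-\bm g^*\in\mathcal{N}_{\texttt{conv}\,\mathcal{G}_\varepsilon(\bm x)}(\bm g^*)$, which is exactly your projection inequality $\bm g^T\bm g^*\geq\lVert\bm g^*\rVert^2$, and then draws the same contradiction with \eqref{Criterion1}. Your write-up simply spells out the final contradiction step a bit more explicitly.
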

\begin{proof}
	Since $\bm g^*$ solves problem \eqref{Approx-eps-d-d}, we have \cite{Rockafellar2004}
	$$-\bm g^*\in \mathcal{N}_{\texttt{conv} \mathcal{G}_\varepsilon(\bm x)} (\bm g^*), $$
	in which $\mathcal{N}_{\texttt{conv} \mathcal{G}_\varepsilon(\bm x)} (\bm g^*)$ denotes the normal cone of the set $\texttt{conv} \mathcal{G}_\varepsilon(\bm x)$ at $\bm g^*$. This means
	\begin{equation}\label{L1-1}
	\bm g^T \bm g^*\geq \lVert \bm g^*\rVert^2, \quad \text{for all} \,\, \bm g\in \texttt{conv} \mathcal{G}_\varepsilon(\bm x).
	\end{equation}
	Therefore, if we compute $\boldsymbol{\xi}_{m+1}\in\partial_\varepsilon f(\bm x)$ such that $\boldsymbol{\xi}_{m+1}^T {\bm d} \geq -\beta \lVert \bm g^*\rVert$, inequality \eqref{L1-1} implies $\boldsymbol{\xi}_{m+1}\notin\text{\rm\texttt{conv}} \mathcal{G}_\varepsilon(\bm x)$.
\end{proof}
Based on the preceding discussion, in Algorithm \ref{Line-Search},  we present a new two-point variant of Mifflin's line search \cite{kiwielbook,Mifflin2} which either obtains the step length $t>0$ satisfying sufficient decrease condition \eqref{Suff-decrease} or provides some $\boldsymbol{\xi}_{m+1}\in\partial_\varepsilon f(\bm x)$ satisfying criterion~\eqref{Criterion1}.

\begin{algorithm}
	\caption{A Two-Point Line Search (LS)}
	\label{Line-Search}
	\hspace*{\algorithmicindent}\textbf{Inputs:} Radius $\varepsilon\in(0,1)$, current point $\bm x\in\mathbb{R}^n$, and search direction ${\bm d}=-\bm g^*/\lVert \bm g^*\rVert$ with $\bm g^*\neq \bm 0$. \\
	\hspace*{\algorithmicindent}\textbf{Parameters:} $\beta_1, \beta_2\in(0,1)$ with $\beta_1<\beta_2$, reduction factor $\zeta\in(0,0.5)$, lower bound $\bar t\in(0, \varepsilon)$, and positive integer $p\in\mathbb{N}$. \\
	\hspace*{\algorithmicindent}\textbf{Outputs:} A point $\bm s\in\mathbb{R}^n$ and the indicator ${\rm \bm I}\in\{0,1\}$.\\
	
	\hspace*{\algorithmicindent}\textbf{Function:} $\{\bm s, {\rm \bm I}\}$\,\,=\,\,\texttt{T-PLS}\,($\varepsilon$, $\mathbf x$, ${\bm d}$)
	
	\begin{algorithmic}[1]
		\STATE{\textbf{Initialization:} Choose $t_0\in(\bar t, \varepsilon)$ and set $\bar t_0:=1$. Compute $\boldsymbol{\xi}_0\in\partial f(\bm x+t_0{\bm d})$, and  set $t^l_0:=0, t^u_0:=\varepsilon$, $i:=0$ ;}
		\WHILE{ true }
		
		\IF{$f(\bm x+t_i{\bm d})-f(\bm x)\leq -\beta_1\, t_i\, \lVert \bm g^*\rVert,$}
		\STATE {Set $t^l_{i+1}:=t_i, \,\,\,\, t^u_{i+1}:=t^u_i$ ;}
		\ELSE
		\STATE{Set $t^l_{i+1}:=t^l_i, \,\,\,\, t^u_{i+1}:=t_i$ ;}
		\ENDIF
		\IF{$f(\bm x+\bar t_i{\bm d})-f(\bm x)\leq -\beta_1\, \bar t_i\, \lVert \bm g^*\rVert,$ \rm{\textbf{and}} $ \bar t_i\geq\bar{t}\,\,,$}
		\STATE {Set ${\rm \bm I}:=1$ and $\bm s:=\bm x+\bar t_i{\bm d}$ ;}
		\RETURN {$\{\bm s, {\rm \bm I}\}$ and \textbf{Stop} ;}
		\ENDIF
		\IF{$\boldsymbol{\xi}_i^T{\bm d}\geq -\beta_2 \lVert \bm g^*\rVert ,$}
		\STATE {Set ${\rm \bm I}:=0$ and $\bm s:=\boldsymbol{\xi}_i$ ;}
		\RETURN {$\{\bm s, {\rm \bm I}\}$ and \textbf{Stop} ;}
		\ENDIF
		\STATE{Choose $t_{i+1}\in \left[ t^l_{i+1}+\zeta(t^u_{i+1}-t^l_{i+1}),\, t^u_{i+1}-\zeta(t^u_{i+1}-t^l_{i+1})   \right]$  ;}
		\STATE{Set $\bar t_{i+1}:=\exp(\frac{\log t_0}{p})^{i+1}$ ;}
		\STATE{Compute $\boldsymbol{\xi}_{i+1}\in \partial f(\bm x+t_{i+1} {\bm d})$ ;}
		\STATE{Set $i:=i+1$ ;}
		
		\ENDWHILE
	\end{algorithmic}
	\hspace*{\algorithmicindent}\textbf{ End Function}
\end{algorithm}

There are three conditional blocks and two step lengths in Algorithm \ref{Line-Search}, $t_i$ and $\bar t_i$. We employ the step length $t_i$ to find an element of $\partial_\varepsilon f(\bm x)$ satisfying \eqref{Criterion1}, which is controlled by the third conditional block. Since we should keep the computed subgradients in $\partial_\varepsilon f(\bm x)$, the trial step length $t_i$ varies within the interval $(0,\varepsilon)$. The length of this interval is efficiently reduced by the first conditional block. Also, by using the trial step length $\bar t_i$, we look for a suitable step length in the interval $(0,1]$  satisfying the sufficient decrease condition \eqref{Suff-decrease}, which is done in the second conditional block. Consequently, the indicator ${\rm \bm I=1}$ suggests a descent step by using the resulting point $\bm s\in\mathbb{R}^n$, i.e., the current point $\bm x$ is updated by $\bm x^+:=\bm s$. On the other hand,  the indicator ${\rm \bm I=0}$ reveals that one can use the obtained subgradient $\bm s\in\partial_\varepsilon f(\bm x)$ to improve  $\mathcal{G}_\varepsilon(\bm x)$, i.e., we set $\boldsymbol{\xi}_{m+1}:=\bm s$ and $\mathcal{G}_\varepsilon(\bm x)$ is updated by $\mathcal{G}^+_\varepsilon(\bm x):=\mathcal{G}_\varepsilon(\bm x)\cup\{\boldsymbol{\xi}_{m+1}\}$.

In the following, we show that Algorithm~\ref{Line-Search} terminates after finite number of iterations. To this end, we start with the following lemma.

\begin{lemma}\label{L2} Suppose that  Algorithm \ref{Line-Search} does not terminate. Then
	\begin{itemize}
		\item[(i)] For any $i\geq 0$, we have $t_i\in\{t_{i+1}^l, t_{i+1}^u  \}$. Moreover, for all $i\geq 1$,
		\begin{align}
		&0<t^u_{i+1}-t^l_{i+1}\leq (1-\zeta) (t^u_{i}-t^l_{i}). \label{L2-1}\\&
		0\leq t^l_i\leq t^l_{i+1}< t^u_{i+1}\leq t^u_i\leq \varepsilon. \label{L2-2}
		\end{align}
		\item[(ii)] There exists $t^*\in[0,\varepsilon]$ such that $t^u_i\downarrow t^*$, $t^l_i\uparrow t^*$, and $t_i\to t^*$ as $i\to \infty$. In addition $$t^*\in \mathcal{T}:=\{t\,\, : \,\, f(\bm x +t {\bm d})-f(\bm x)\leq -\beta_1\, t\,  \lVert\bm g^*\rVert   \}.$$
		\item[(iii)] Let $\mathcal{I}:=\{ i\,\, : \, \, t^u_{i+1}=t_i \}$. Then $\mathcal{I}$ is infinite.
		
	\end{itemize}

\end{lemma}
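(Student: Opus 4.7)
Part (i) would follow by reading off the update rules of Algorithm~\ref{Line-Search}. The membership $t_i \in \{t^l_{i+1}, t^u_{i+1}\}$ is immediate from the two branches of the first conditional block; monotonicity ($t^l_i$ non-decreasing, $t^u_i$ non-increasing) together with the bounds $0 \leq t^l_i$ and $t^u_i \leq \varepsilon$ follow by induction from the initialization $t^l_0 = 0$, $t^u_0 = \varepsilon$. The strict separation $t^l_{i+1} < t^u_{i+1}$ and the contraction $t^u_{i+1} - t^l_{i+1} \leq (1-\zeta)(t^u_i - t^l_i)$ both come from the selection rule that places $t_i$ in the $\zeta$-shrunken interior of $[t^l_i, t^u_i]$: whichever endpoint $t_i$ replaces, it sits at distance at least $\zeta(t^u_i - t^l_i)$ from that endpoint, so the new interval stays nondegenerate and has length at most $(1-\zeta)$ times the old.

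For part (ii), $\{t^l_i\}$ and $\{t^u_i\}$ are monotone and bounded, so they converge; iterating the contraction from (i) yields $t^u_i - t^l_i \leq (1-\zeta)^i(t^u_0 - t^l_0) \to 0$, forcing a common limit $t^* \in [0, \varepsilon]$, and the sandwich $t^l_i \leq t_i \leq t^u_i$ gives $t_i \to t^*$ as well. To obtain $t^* \in \mathcal{T}$, I plan to prove by induction that $t^l_i \in \mathcal{T}$ for every $i$. The base case $t^l_0 = 0$ is trivial; in the inductive step, Case~1 sets $t^l_{i+1} = t_i$, which lies in $\mathcal{T}$ by the very condition defining Case~1, while Case~2 preserves $t^l_{i+1} = t^l_i$, which lies in $\mathcal{T}$ by the inductive hypothesis. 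Continuity of $f$ then lets me pass to the limit in the defining inequality of $\mathcal{T}$ to conclude $t^* \in \mathcal{T}$.

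For part (iii) I will argue by contradiction. Since $\mathcal{I}$ indexes exactly the iterations in which Case~2 of the first conditional fires, and only Case~2 strictly decreases $t^u_i$, a finite $\mathcal{I}$ forces $t^u_i$ to stabilize eventually. First, $\mathcal{I} = \emptyset$ is ruled out by the schedule of the secondary step length: with $\bar t_0 = 1$ and $\bar t_{i+1} = t_0^{(i+1)/p}$ one has $\bar t_p = t_0$, and if Case~1 fires at iteration $0$ then $t_0 \in \mathcal{T}$; at iteration $p$ both parts of the second conditional then succeed (namely $\bar t_p = t_0 \geq \bar t$ by the initialization, and sufficient decrease holds at $\bar t_p = t_0$), so the algorithm terminates, contradicting the standing non-termination hypothesis. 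Second, if $\mathcal{I}$ is nonempty and finite with maximum $M$, then $t^u_{M+1} = t_M$ and $t^u_i = t_M$ for all $i > M$, so $t^* = t_M$; but $M \in \mathcal{I}$ means Case~2 was triggered at iteration $M$, i.e., $t_M \notin \mathcal{T}$, contradicting $t^* \in \mathcal{T}$ from part (ii). The main obstacle here is isolating the $\mathcal{I} = \emptyset$ edge case, which hinges on the precise schedule for $\bar t_i$ and on $t_0 \in (\bar t, \varepsilon)$; once that is disposed of, the nonempty subcase is a clean application of part (ii).
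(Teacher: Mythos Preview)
Your proposal is correct and follows essentially the same route as the paper's proof. The paper dismisses part (i) with a one-line ``follows from the construction'' whereas you spell out the mechanism; in part (ii) the paper squeezes via $t_i\in\{t^l_{i+1},t^u_{i+1}\}$ rather than your sandwich $t^l_i\le t_i\le t^u_i$, and takes limits in \eqref{L2-1} rather than iterating the contraction, but these are cosmetic differences; and your two-case contradiction in part (iii) mirrors the paper's argument exactly (the paper's $\bar i-1$ is your $M$).
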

\begin{proof}
	(i) This part follows immediately from the construction of the algorithm.
	
	(ii) We conclude from \eqref{L2-1} that the sequence $\{t^u_i-t^l_i\}_{i}$ is bounded from below and decreasing. Thus, it converges. Assume $\{t^u_i-t^l_i\}\to A$ as $i\to\infty$. Letting $i$ tend to infinity in inequality \eqref{L2-1}, we deduce $0\leq A\leq(1-\zeta)A$. Now, $\zeta\in(0,0.5)$ gives $A=0$. Since $\{t^u_i-t^l_i\}\to 0$ as $i\to\infty$, inequality \eqref{L2-2} implies the existence of $t^*\in[0,\varepsilon]$ such that $t^l_i\uparrow t^*, t^u_i\downarrow t^*$ as $i\to\infty$. Furthermore, the fact $t_i\in\{ t^l_{i+1}, t^u_{i+1} \}$ for all $i\geq 0$ yields $t_i\to t^*$ as $i\to\infty$. To prove $t^*\in \mathcal T$, we note that $t^l_i\in \mathcal T$ for all $i\geq 0$. In other words,
	$$f(\bm x+t^l_i {\bm d})-f(\bm x)\leq -\beta_1\, t^l_i\,\lVert\bm g^*\rVert, \qquad \text{for all} \,\, i\geq 0. $$
	Therefore, continuity of $f$ along with the fact that $t^l_i\uparrow t^*$ as $i\to\infty$  implies
	$$f(\bm x+t^* {\bm d})-f(\bm x)\leq -\beta_1\, t^*\,\lVert\bm g^*\rVert, $$
	which means $t^*\in \mathcal T.$

	(iii) First, we prove  $\mathcal{I}\neq\emptyset$. By contradiction, suppose $\mathcal{I}=\emptyset$, which means
	$$ f(\bm x+t_i {\bm d})-f(\bm x)\leq -\beta_1\, t_i\,  \lVert\bm g^*\rVert, \qquad \text{for all} \,\, i\geq 0. $$
	In particular, for $i=0$, we have
	\begin{equation}\label{L2-3}
	f(\bm x+t_0 {\bm d})-f(\bm x)\leq -\beta_1\, t_0\,  \lVert\bm g^*\rVert.
	\end{equation}
	On the other hand, at iteration $i=p-1$, one has $\bar t_{i+1}=\exp(\frac{\log t_0}{p})^p=t_0>\bar t$. This fact along with \eqref{L2-3} implies that Algorithm \ref{Line-Search} terminates at the iteration $i=p$  with indicator ${\rm \bm I=1}$, which violates the assumption.
	Thus, $\mathcal{I}\neq\emptyset$. Next, we prove  $\mathcal{I}$ is infinite. By contradiction, assume  $\mathcal{I}$ is finite. Then, as $\mathcal{I}\neq\emptyset$ and $t^u_i\downarrow t^*$ as $i\to\infty$, there exists $\bar i\in\mathbb{N}$ such that
	$$t^u_i=t^*, \qquad \text{for all} \,\, i\geq\bar i \quad \text{and} \quad t^u_i>t^*, \qquad \text{for all} \,\, i<\bar i. $$
	Thus, $t^*=t^u_{\bar i}=t_{\bar i-1}$, and hence
	$$f(\bm x+t_{\bar{i}-1} {\bm d})-f(\bm x)> -\beta_1\, t_{\bar{i}-1}\,  \lVert\bm g^*\rVert, $$
	yielding $t^*\notin \mathcal T$, which violates the fact  $t^*\in \mathcal T$.
\end{proof}	
Now, we are prepared to state the main result for Algorithm \ref{Line-Search}. Before it, we need to make the following semismooth assumption about the objective function $f$, which is commonly used in nonsmooth optimization \cite{kiwielbook,Mifflin}.

\begin{assumption}\label{Assumption}
	For any $\bm z, \bm d\in\mathbb{R}^n$ and sequences $\{\boldsymbol{\xi}_i\}_i\subset\mathbb{R}^n$ and $\{h_i\}_i\subset\mathbb{R}_+$ satisfying $h_i\downarrow 0$ as $i\to\infty$ and $\boldsymbol{\xi}_i\in\partial f(\bm z+h_i\bm d)$, one has
	$$\limsup_{i\to\infty} \boldsymbol{\xi}_i^T \bm d\geq \liminf_{i\to\infty} \frac{f(\bm z+ h_i\bm d)-f(\bm z)}{h_i}.$$	
\end{assumption}
The locally Lipschitz function $f$ which satisfies the above assumption is called \emph{weakly upper semismooth} \cite{Mifflin}. The class of weakly upper semismooth functions is quite broad. For example, convex, concave, and max- and min-type functions are weakly upper semismooth (for more details, see \cite{bagirov2020} and \cite{Mifflin}).
\begin{theorem}
	Suppose that Assumption \ref{Assumption} holds. Then Algorithm \ref{Line-Search} terminates after finitely many iterations.
\end{theorem}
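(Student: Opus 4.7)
The plan is to argue by contradiction: suppose Algorithm \ref{Line-Search} generates an infinite sequence of iterations. Then none of the two return statements fires, so for every $i$ the indicator is never set to $1$, which means for every $i$ either $\bar t_i<\bar t$ or $f(\bm x+\bar t_i{\bm d})-f(\bm x)>-\beta_1\,\bar t_i\,\lVert\bm g^*\rVert$; and simultaneously $\boldsymbol{\xi}_i^T{\bm d}<-\beta_2\lVert\bm g^*\rVert$ for every $i$. By Lemma \ref{L2}(ii) I already have a limit $t^*\in[0,\varepsilon]$ with $t_i\to t^*$ and $t^*\in\mathcal T$, and by Lemma \ref{L2}(iii) the index set $\mathcal I=\{i:t^u_{i+1}=t_i\}$ is infinite. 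For every $i\in\mathcal I$ the first conditional block sent $t_i$ to the upper bracket, which is exactly the statement that $t_i\notin\mathcal T$, i.e.
\[
f(\bm x+t_i{\bm d})-f(\bm x)>-\beta_1\,t_i\,\lVert\bm g^*\rVert.
\]
The goal is to combine this with the defining inequality $t^*\in\mathcal T$ and then invoke Assumption \ref{Assumption} to reach a contradiction with $\boldsymbol{\xi}_i^T{\bm d}<-\beta_2\lVert\bm g^*\rVert$.

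The decisive step is to set $\bm z:=\bm x+t^*{\bm d}$ and $h_i:=t_i-t^*$ for $i\in\mathcal I$. Since $t_i=t^u_{i+1}\downarrow t^*$ along $\mathcal I$, one has $h_i\downarrow 0$, and $\boldsymbol{\xi}_i\in\partial f(\bm x+t_i{\bm d})=\partial f(\bm z+h_i{\bm d})$. Subtracting the inequality $f(\bm z)-f(\bm x)\le -\beta_1 t^*\lVert\bm g^*\rVert$ from the previous display and dividing by $h_i>0$ gives
\[
\frac{f(\bm z+h_i{\bm d})-f(\bm z)}{h_i}>-\beta_1\lVert\bm g^*\rVert,\qquad i\in\mathcal I.
\]
Assumption \ref{Assumption} then yields
\[
\limsup_{i\to\infty,\,i\in\mathcal I}\boldsymbol{\xi}_i^T{\bm d}\ \ge\ \liminf_{i\to\infty,\,i\in\mathcal I}\frac{f(\bm z+h_i{\bm d})-f(\bm z)}{h_i}\ \ge\ -\beta_1\lVert\bm g^*\rVert.
\]
Because $\beta_1<\beta_2$, this strict gap forces $\boldsymbol{\xi}_i^T{\bm d}\ge -\beta_2\lVert\bm g^*\rVert$ for infinitely many $i\in\mathcal I$, so the third conditional block must have triggered, contradicting non-termination.

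The main obstacle I anticipate is the boundary case $t^*=0$, where the shift $\bm z:=\bm x$ has $h_i=t_i\downarrow 0$ and the argument above goes through verbatim, but one must also rule out that nontermination is instead caused by the failure of the auxiliary condition $\bar t_i\ge\bar t$ in the second conditional block. This is handled separately using the explicit recursion $\bar t_{i+1}=t_0^{(i+1)/p}$: at $i+1=p$ one has $\bar t_p=t_0>\bar t$, so the $\bar t$-branch is genuinely tested for infinitely many $i$ with $\bar t_i\ge\bar t$ (in fact every $p$-th iterate of a shifted recursion), and the same semismoothness computation applied to $\bar t_i$-increments shows it must eventually succeed unless the $\boldsymbol{\xi}_i$-condition already did. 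A clean way to avoid the case split entirely is to observe that the contradiction with the third conditional block is derived purely from the infinite subsequence $\mathcal I$ and makes no reference to $\bar t_i$ at all, so the $\bar t_i$-branch can be ignored in the contradiction step.
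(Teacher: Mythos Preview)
Your argument is correct and follows the same route as the paper's proof: assume non-termination, use Lemma~\ref{L2}(ii)--(iii) to obtain the infinite index set $\mathcal I$ with $t_i>t^*$ and $t_i\notin\mathcal T$, set $\bm z=\bm x+t^*\bm d$ and $h_i=t_i-t^*$, derive the difference-quotient lower bound $-\beta_1\lVert\bm g^*\rVert$, apply Assumption~\ref{Assumption}, and contradict the standing inequality $\boldsymbol{\xi}_i^T\bm d<-\beta_2\lVert\bm g^*\rVert$. Your closing observation is the right one: the contradiction runs entirely through the third conditional block and the subsequence $\mathcal I$, so the discussion of the $\bar t_i$-branch and the case $t^*=0$ is unnecessary (the paper omits it as well).
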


\begin{proof}\label{T1}
	By indirect proof suppose that Algorithm \ref{Line-Search} does not terminate.	Let $\mathcal{I}$ be as defined in part (iii) of Lemma \ref{L2}. Then $\mathcal{I}$ is infinite and
	\begin{equation}\label{T1-1}
	f(\bm x+t_i {\bm d})-f(\bm x)>-\beta_1 t_i \lVert \bm g^*\rVert, \quad \text{for all} \,\, i\in\mathcal{I}.
	\end{equation}
	Moreover, in virtue of part (ii) of Lemma \ref{L2}, we have $t_i\to t^*$ as $i\to\infty$ with $t^*\in \mathcal T$, i.e.,
	\begin{equation}\label{T1-2}
	f(\bm x+t^* {\bm d})-f(\bm x)\leq-\beta_1 t^* \lVert \bm g^*\rVert.
	\end{equation}
	Combining \eqref{T1-1} and \eqref{T1-2}, one can write
	\begin{equation}\label{T1-3}
	f(\bm x+t_i {\bm d})-f(\bm x+t^* {\bm d})>-\beta_1 \lVert\bm g^*\rVert(t_i-t^*), \quad \text{for all}\,\, i\in\mathcal{I}.
	\end{equation}
	Let $h_i:=t_i-t^*>0$, for all $i\in\mathcal{I}$, and $\bm z:=\bm x+t^*{\bm d}$. Then \eqref{T1-3} is represented as
	\begin{equation}\label{T1-4}
	-\beta_1\lVert\bm g^*\rVert< \frac{f(\bm z+h_i {\bm d})-f(\bm z)}{h_i}, \quad \text{for all}\,\, i\in\mathcal{I}.
	\end{equation}
	Inequality \eqref{T1-4} along with the semismoothness hypotheses of Assumption \ref{Assumption} yields
	\begin{equation}\label{T1-5}
	-\beta_1\lVert\bm g^*\rVert\leq \liminf_{i\xrightarrow{\mathcal{I}}\infty} \frac{f(\bm z+h_i {\bm d})-f(\bm z)}{h_i}\leq \limsup_{i\xrightarrow{\mathcal{I}}\infty} \boldsymbol{\xi}_i^T {\bm d}.
	\end{equation}
	On the other hand, as the algorithm does not terminate by the third conditional block, it must be the case that
	$$\boldsymbol{\xi}_i^T {\bm d}<-\beta_2\lVert\bm g^*\rVert, \quad \text{for all} \,\, i\in\mathcal{I}. $$
	Therefore
	$$ \limsup_{i\xrightarrow{\mathcal{I}}\infty} \boldsymbol{\xi}_i^T {\bm d}\leq -\beta_2\lVert\bm g^*\rVert <-\beta_1\lVert\bm g^*\rVert,$$
	which contradicts \eqref{T1-5}.
\end{proof}

\section{Computation of a $(\delta, \mathcal{G}_\varepsilon(\bm x))$-stationary point} \label{Sec4}
In this section, for a given $\delta>0$ and $\varepsilon>0$, we employ the proposed line search procedure of Algorithm \ref{Line-Search} to develop an iterative process for finding a $(\delta, \mathcal{G}_\varepsilon(\bm x))$-stationary point. Such a process is presented in Algorithm~\ref{Alg2}.

\begin{algorithm}
	\caption{Computation of a $(\delta, \mathcal{G}_\varepsilon(\bm x))$-stationary point}
	\label{Alg2}
	\hspace*{\algorithmicindent}\textbf{Inputs:} Starting point $\bm x_0\in\mathbb{R}^n$, radius $\varepsilon\in(0,1)$, stationarity tolerance $\delta>0$.  \\
	\hspace*{\algorithmicindent}\textbf{Output:} A $(\delta, \mathcal{G}_\varepsilon(\bm x))$-stationary point $\bm x\in\mathbb{R}^n$.\\
	
	\hspace*{\algorithmicindent}\textbf{Function:} $\bm x$\,\,=\,\,\texttt{DG-SP}\,($\bm x_0, \varepsilon, \delta$)
	
	\begin{algorithmic}[1]
		\STATE{\textbf{Initialization:} Compute $\boldsymbol{\xi}_0\in\partial f(\bm x_0)$, set $\mathcal{G}_\varepsilon(\bm x_0):=\{\boldsymbol{\xi}_0\}$ and $k:=0$ ;}
		\WHILE{ true }
		\STATE{Set $\bm g^*_k:=\arg\min\{\lVert \bm g\lVert \,\, : \,\, \bm g\in\texttt{conv} \mathcal{G}_\varepsilon(\bm x_k)   \}$ ;}
		\IF{$\lVert \bm g^*_k\rVert\leq \delta,$}
		\RETURN {$\bm x_k$ as a $(\delta, \mathcal{G}_\varepsilon(\bm x_k))$-stationary point and \textbf{Stop} ; }
		\ENDIF
		\STATE{Compute the search direction ${\bm d}_k:=-\bm g^*_k/\lVert \bm g^*_k\rVert$ ; }
		\STATE{Set $\{\bm s_k, {\rm \bm I_k} \}:=$ \texttt{T-PLS} ($\varepsilon$, $\mathbf x_k$, ${\bm d}_k$)  ;}
		
		\IF{${\rm \bm I}_k=1,$}
		\STATE {Set $\bm x_{k+1}:=\bm s_k$ ;}
		\STATE{Compute $\boldsymbol{\xi}_{k+1}\in\partial f(\bm x_{k+1})$ ; }
		\STATE{Set $\mathcal{G}_\varepsilon(\bm x_{k+1}):=\{\boldsymbol{\xi}_{k+1} \}$ ; }
		\ENDIF
		\IF{${\rm \bm I}_k=0,$}
		\STATE {Set $\bm x_{k+1}:=\bm x_k$ and $\boldsymbol{\xi}_{k+1}:=\bm s_k$  ;}
		\STATE{Set $\mathcal{G}_\varepsilon(\bm x_{k+1}):= \mathcal{G}_\varepsilon(\bm x_{k})\cup\{\boldsymbol{\xi}_{k+1} \}$ ; }
		\ENDIF
		\STATE{Set $k:=k+1$ ;}
		
		\ENDWHILE
	\end{algorithmic}
	\hspace*{\algorithmicindent}\textbf{ End Function}
\end{algorithm}

Regarding Algorithm \ref{Alg2}, let
\begin{equation}\label{Index-set}
\mathcal{A}:=\{k\in\mathbb{N}_0 \,\, : \,\, {\rm \bm I}_k=1  \}.
\end{equation}

In the rest of this section, we aim to show that Algorithm \ref{Alg2} terminates after finite number of iterations.

In the following lemma,  $lev_\alpha(f):=\{\bm x\in\mathbb{R}^n \,\,: \,\, f(\bm x)\leq \alpha \}$ is the $\alpha$-sublevel set of the function $f$. Since $f$ is locally Lipschitz, $lev_\alpha(f)$ is closed, for each $\alpha\in\mathbb{R}$. Moreover, at iteration $k$ of Algorithm \ref{Alg2},  it is assumed that the line search procedure of Algorithm~\ref{Line-Search} terminates at the $i_k$-th iteration.
\begin{lemma}\label{L3}
	Suppose that  Assumption \ref{Assumption} holds and $lev_{f(\bm x_0)}(f)$ is bounded. If Algorithm \ref{Alg2} does not terminate, i.e., $k\to\infty$, then $\mathcal{A}$ is finite.
\end{lemma}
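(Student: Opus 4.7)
The plan is to exploit the fact that each accepted descent step ($k\in\mathcal{A}$) produces a uniform decrease in the objective value, while non-descent steps ($I_k=0$) leave $x_k$ unchanged. Combined with the boundedness of the sublevel set $lev_{f(\bm x_0)}(f)$, an infinite $\mathcal{A}$ would force $f(\bm x_k)\to-\infty$, which is impossible.

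First I would note that since Assumption~\ref{Assumption} holds, Theorem~\ref{T1} guarantees that the call to \texttt{T-PLS} at every iteration $k$ terminates in finitely many inner steps $i_k$, so the index $k$ is well-defined for all $k\in\mathbb{N}_0$. Next, I would observe that the sequence $\{f(\bm x_k)\}$ is non-increasing: when $I_k=0$ we have $\bm x_{k+1}=\bm x_k$ and trivially $f(\bm x_{k+1})=f(\bm x_k)$, while for $k\in\mathcal{A}$ the second conditional block of Algorithm~\ref{Line-Search} gives
\begin{equation*}
f(\bm x_{k+1})-f(\bm x_k)=f(\bm x_k+\bar t_{i_k}\bm d_k)-f(\bm x_k)\leq -\beta_1\,\bar t_{i_k}\,\lVert\bm g^*_k\rVert,\qquad \bar t_{i_k}\geq \bar t.
\end{equation*}
Because Algorithm~\ref{Alg2} did not terminate at iteration $k$, the stopping test at line~4 fails, hence $\lVert\bm g^*_k\rVert>\delta$. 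Therefore, for every $k\in\mathcal{A}$,
\begin{equation*}
f(\bm x_{k+1})-f(\bm x_k)\leq -\beta_1\,\bar t\,\delta<0,
\end{equation*}
a strictly negative quantity that does not depend on $k$.

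Since $\{f(\bm x_k)\}$ is non-increasing, all iterates lie in $lev_{f(\bm x_0)}(f)$. By hypothesis this sublevel set is bounded, and it is closed by continuity of $f$, hence compact; consequently $f$ attains a finite minimum on it, say $f_{\min}:=\min\{f(\bm x):\bm x\in lev_{f(\bm x_0)}(f)\}>-\infty$, so $f(\bm x_k)\geq f_{\min}$ for all $k$.

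The conclusion then follows by contradiction: if $\mathcal{A}$ were infinite, telescoping over the elements of $\mathcal{A}$ and using the fact that $f$ never increases on the complementary indices yields, for any $N$ successive elements $k_1<k_2<\cdots<k_N$ of $\mathcal{A}$,
\begin{equation*}
f(\bm x_{k_N+1})-f(\bm x_{k_1})\leq -N\,\beta_1\,\bar t\,\delta,
\end{equation*}
so $f(\bm x_k)\to-\infty$ as $k\to\infty$, contradicting $f(\bm x_k)\geq f_{\min}$. Hence $\mathcal{A}$ must be finite. The only delicate point I foresee is making sure that the iterates actually stay in $lev_{f(\bm x_0)}(f)$ and that the step length used in the descent inequality is the accepted $\bar t_{i_k}$ (not the trial $t_{i_k}$), but both facts follow directly from the structure of Algorithm~\ref{Line-Search} and the identification $\bm x_{k+1}=\bm s_k=\bm x_k+\bar t_{i_k}\bm d_k$ when $I_k=1$.
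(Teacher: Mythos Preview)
Your proof is correct and follows essentially the same approach as the paper: both argue that each $k\in\mathcal{A}$ yields a uniform decrease $f(\bm x_{k+1})-f(\bm x_k)\le -\beta_1\bar t\,\delta$ (using $\bar t_{i_k}\ge\bar t$ and $\lVert\bm g^*_k\rVert>\delta$), that $f$ is unchanged when $I_k=0$, and that boundedness of $lev_{f(\bm x_0)}(f)$ gives a finite lower bound for $f$, whence an infinite $\mathcal{A}$ forces $f(\bm x_k)\to-\infty$. The only minor additions you make---invoking Theorem~\ref{T1} for well-definedness and explicitly verifying that the iterates remain in the sublevel set---are harmless elaborations.
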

\begin{proof}
	Since $lev_{f(\bm x_0)}(f)$ is bounded and closed, we conclude
	\begin{equation}\label{L3-0}
	f^*:=\min\{f(\bm x) \,\, : \,\, \bm x\in\mathbb{R}^n \}> -\infty.
	\end{equation}
	By indirect proof, assume $\mathcal A$ is infinite. As Algorithm \ref{Alg2} does not terminate, one has
	\begin{equation}\label{L3-1}
	\lVert\bm g^*_k\rVert>\delta, \quad \text{for all}\,\, k.
	\end{equation}
	Furthermore, for any $k\in\mathcal{A}$, we have ${\rm \bm I}_k=1$ and hence
	\begin{equation}\label{L3-1'}
	f(\bm x_{k+1})-f(\bm x_k)=f(\bm s_k)-f(\bm x_k)\leq -\beta_1 \bar t_{i_k} \lVert\bm g^*_k\rVert, \quad \text{for all} \,\, k\in\mathcal A.
	\end{equation}
	By construction of Algorithm \ref{Line-Search}, we have $\bar t_{i_k}\geq \bar t>0$. Thus, in view of \eqref{L3-1} and \eqref{L3-1'}, one can write
	\begin{equation}\label{L3-2}
	f(\bm x_{k+1})-f(\bm x_k)\leq -\beta_1 \bar t \delta, \quad \text{for all} \,\, k\in\mathcal A.
	\end{equation}
	Moreover, for any $k\in\mathbb{N}_0\setminus\mathcal{A}$, we have ${\rm \bm I_k}=0$ and thus
	\begin{equation}\label{L3-3}
	f(\bm x_{k+1})=f(\bm x_k), \quad \text{for all} \,\, k\in\mathbb{N}_0\setminus\mathcal A.
	\end{equation}
	Using \eqref{L3-2} and \eqref{L3-3} inductively, for each $k\in\mathbb{N}_0$, we get
	\begin{equation}\label{L3-4}
	f(\bm x_{k+1})\leq f(\bm x_0)-\sum_{\substack{j\in\mathcal{A}\\ j\leq k+1}} \beta_1 \bar t\delta.
	\end{equation}
	Since $\mathcal{A}$ is infinite, $\sum_{\substack{j\in\mathcal{A}\\ j\leq k+1}} \beta_1 \bar t\delta\to\infty$ as $k\to\infty$. Therefore, \eqref{L3-4} implies $f(\bm x_k)\to-\infty$ as $k\to\infty$, which contradicts \eqref{L3-0}.
\end{proof}
Our  principal result about Algorithm \ref{Alg2} is stated in the next theorem.

\begin{theorem}\label{T2}
	Suppose that Assumption \ref{Assumption} holds and $lev_{f(\bm x_0)}(f)$ is bounded. Then Algorithm \ref{Alg2} terminates in a finite number of iterations.
\end{theorem}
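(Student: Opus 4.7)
The plan is to argue by contradiction: suppose Algorithm \ref{Alg2} never terminates and combine Lemma \ref{L3} with a quantitative shrinkage estimate on $\|\bm g^*_k\|$ during the null iterations ($\mathbf{I}_k=0$).

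First I would invoke Lemma \ref{L3} to conclude that if the algorithm does not stop, the descent-iteration index set $\mathcal A$ is finite. Hence there exists $k_0\in\mathbb{N}_0$ such that $\mathbf{I}_k=0$ for every $k\geq k_0$. From the structure of Algorithm \ref{Alg2}, this implies $\bm x_k=\bm x_{k_0}=:\bar{\bm x}$ for all $k\geq k_0$, the bundle $\mathcal G_\varepsilon(\bm x_k)$ grows monotonically by appending a new subgradient $\boldsymbol{\xi}_{k+1}\in\partial_\varepsilon f(\bar{\bm x})$ at each step, and moreover $\|\bm g^*_k\|>\delta$ for every $k\geq k_0$ (otherwise the stopping test would trigger). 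Since $\partial_\varepsilon f(\bar{\bm x})$ is compact, I fix $M:=\max\{\|\boldsymbol{\xi}\|:\boldsymbol{\xi}\in\partial_\varepsilon f(\bar{\bm x})\}<\infty$.

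Next I would exploit what the line search guarantees when it returns $\mathbf{I}_k=0$: the output subgradient $\boldsymbol{\xi}_{k+1}$ satisfies the criterion \eqref{Criterion1} with $\beta_2$, i.e. $\boldsymbol{\xi}_{k+1}^T\bm d_k\geq-\beta_2\|\bm g^*_k\|$, which rewrites as $\boldsymbol{\xi}_{k+1}^T\bm g^*_k\leq\beta_2\|\bm g^*_k\|^2$. Since $\bm g^*_k\in\texttt{conv}\,\mathcal G_\varepsilon(\bm x_k)\subset\texttt{conv}\,\mathcal G_\varepsilon(\bm x_{k+1})$ and $\boldsymbol{\xi}_{k+1}\in\texttt{conv}\,\mathcal G_\varepsilon(\bm x_{k+1})$, every convex combination $(1-t)\bm g^*_k+t\boldsymbol{\xi}_{k+1}$ with $t\in[0,1]$ lies in $\texttt{conv}\,\mathcal G_\varepsilon(\bm x_{k+1})$, so
\begin{equation*}
\|\bm g^*_{k+1}\|^2\leq\|\bm g^*_k\|^2-2t(1-\beta_2)\|\bm g^*_k\|^2+t^2(2M)^2.
\end{equation*}
Minimizing the right-hand side in $t$ (which is the crux of the bookkeeping) yields
\begin{equation*}
\|\bm g^*_{k+1}\|^2\leq\|\bm g^*_k\|^2-\frac{(1-\beta_2)^2\,\|\bm g^*_k\|^4}{4M^2}\leq\|\bm g^*_k\|^2-\frac{(1-\beta_2)^2\,\delta^4}{4M^2},
\end{equation*}
where the last inequality uses $\|\bm g^*_k\|>\delta$.

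Telescoping this bound over $k\geq k_0$ forces $\|\bm g^*_k\|^2\to-\infty$, contradicting $\|\bm g^*_k\|^2\geq\delta^2>0$. Therefore the standing assumption is false, and Algorithm \ref{Alg2} must terminate after finitely many steps. The main obstacle I anticipate is producing the quantitative decrease of $\|\bm g^*_k\|^2$ uniformly in $k$; everything else (finiteness of the embedded line search from Theorem \ref{T1}, compactness of $\partial_\varepsilon f(\bar{\bm x})$, and the fact that null iterations freeze the iterate) is delivered directly by earlier results and by the construction of the algorithm.
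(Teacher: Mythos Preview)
Your argument is correct and follows essentially the same route as the paper: contradiction, Lemma~\ref{L3} to freeze the iterate at some $\bar{\bm x}$, then the key inequality $\boldsymbol{\xi}_{k+1}^T\bm g^*_k\leq\beta_2\|\bm g^*_k\|^2$ combined with the convex-combination bound $\|\bm g^*_{k+1}\|^2\leq\|(1-t)\bm g^*_k+t\boldsymbol{\xi}_{k+1}\|^2$ and a minimization in $t$. The only cosmetic differences are that the paper phrases the resulting decrease multiplicatively ($\|\bm g^*_{k+1}\|^2\leq\sigma\|\bm g^*_k\|^2$ with $\sigma\in(0,1)$) rather than your additive telescoping, and it explicitly verifies that the minimizer $t^*=(1-\beta_2)\|\bm g^*_k\|^2/(4M^2)$ lies in $(0,1)$ --- a point you should also record, which follows at once from $\|\bm g^*_k\|\leq M$.
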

\begin{proof}
	By indirect proof, assume that Algorithm \ref{Alg2} does not terminate, i.e., $k\to\infty$. Therefore
	\begin{equation}\label{T2-0}
	\lVert \bm g^*_k\lVert>\delta, \quad \text{for all} \,\, k.
	\end{equation}
	Let $\mathcal{A}$ be as defined in \eqref{Index-set}. By Lemma \ref{L3}, $\mathcal{A}$ is finite, and we denote  the largest index in $\mathcal{A}$ by $\bar k$ (in case $\mathcal{A}=\emptyset$, we set $\bar k:=0$). Let $\bar{\bm x}:=\bm x_{\bar k+1}$. Then, for any $k>\bar k$, we have ${\rm \bm I}_k=0$, and hence $\bm x_{k+1}=\bar{\bm x}$, for all $k>\bar k$. Moreover
	\begin{equation}\label{T2-1}
	\mathcal G_\varepsilon(\bm x_{k+1})= \mathcal{G}_\varepsilon(\bm x_k) \cup \{ \boldsymbol{\xi}_{k+1} \}, \quad \text{for all} \,\,  k>\bar k,
	\end{equation}
	in which, $\boldsymbol{\xi}_{k+1}$ satisfies
	\begin{equation*}
	\boldsymbol{\xi}_{k+1}^T{\bm d}_k\geq -\beta_2\lVert \bm g^*_k\lVert,
	\end{equation*}
	or equivalently (note ${\bm d}_k=-\bm g^*_k/\lVert \bm g^*_k\lVert$)
	\begin{equation}\label{T2-2}
	\boldsymbol{\xi}_{k+1}^T{\bm g^*_k}\leq \beta_2\lVert \bm g^*_k\lVert^2.
	\end{equation}
	We also note that $\boldsymbol{\xi}_{k+1}\in\partial_\varepsilon f(\bar{\bm x})$ and $\bm g^*_k\in\texttt{conv}\mathcal{G}_\varepsilon(\bar{\bm x})\subset\partial_\varepsilon f(\bar{\bm x})$, for all $k>\bar k$. Compactness of $\partial_\varepsilon f(\bar{\bm x})$ yields
	$$C_1:=\sup\{ \lVert\boldsymbol{\xi}\rVert\,\,:\,\, \boldsymbol{\xi}\in\partial_\varepsilon f(\bar{\bm x}) \}<\infty.$$
	Set $C_2:=\max\{C_1, \delta\}$. Thus
	\begin{equation}\label{T2-3}
	\lVert \boldsymbol{\xi}_{k+1}-\bm g^*_k\rVert \leq 2C_2, \quad \text{for all} \,\, k>\bar k.
	\end{equation}
	Next, for any $t\in(0,1)$ and $k>\bar k$, we have
	\begin{align}\label{T2-4}
	\lVert \bm g^*_{k+1}\rVert^2&\leq \lVert t \boldsymbol{\xi}_{k+1} + (1-t) \bm g^*_k\lVert^2\nonumber \\&
	=t^2\lVert \boldsymbol{\xi}_{k+1}-\bm g^*_k\lVert^2+2t(\bm{g}^*_k)^T (\boldsymbol{\xi}_{k+1}-\bm g^*_k)+\lVert \bm g^*_k\rVert^2.
	\end{align}
	In view of \eqref{T2-2} and \eqref{T2-3}, one can continue \eqref{T2-4} as
	\begin{align}
	\lVert \bm g^*_{k+1}\rVert^2&\leq 4t^2C_2^2+2t\beta_2\lVert \bm g^*_k\lVert^2-2t \lVert \bm g^*_k\lVert^2+ \lVert \bm g^*_k\lVert^2 \nonumber\\&
	= 4t^2C_2^2+ \left(1-2t(1-\beta_2)\right) \lVert \bm g^*_k\lVert^2 \nonumber \\&
	=:\psi(t),
	\end{align}
	for all $t\in(0,1)$. One can observe $t^*:=(1-\beta_2)\lVert \bm g^*_k\lVert^2/4C_2^2\in(0,1)$ minimizes $\psi(t)$ and
	\begin{equation*}
	\psi(t^*)=\left(1-\frac{(1-\beta_2)^2\lVert \bm g^*_k\lVert^2}{4C_2^2} \right) \lVert \bm g^*_k\lVert^2.
	\end{equation*}
	Using \eqref{T2-0}, the above equality implies
	\begin{equation}\label{T2-5}
	\psi(t^*)\leq\left(1-\frac{(1-\beta_2)^2\delta^2}{4C_2^2} \right) \lVert \bm g^*_k\lVert^2.
	\end{equation}
	Since $\delta\leq C_2$ and $\beta_2\in(0,1)$, we conclude $\sigma:=1-\frac{(1-\beta_2)^2\delta^2}{4C_2^2}\in(0,1)$. Now, \eqref{T2-4} and \eqref{T2-5}
	imply
	\begin{equation}\label{T2-6}
	0\leq \lVert \bm g^*_{k+1}\rVert^2 \leq \psi(t^*)\leq \sigma \lVert\bm g^*_k\lVert^2, \quad \text{for all} \,\, k>\bar k,
	\end{equation}
	which means that the sequence $\{\lVert \bm g^*_{k}\rVert^2 \}_{k>\bar k}$ is decreasing and bounded from below, and hence it converges. Assume $\{\lVert \bm g^*_{k}\rVert^2 \}\to A$ as $k\to\infty$. Letting $k$ approach infinity in inequality \eqref{T2-6}, we obtain $0\leq A\leq\sigma A$. Since $\sigma\in(0,1)$, we conclude $A=0$. Therefore $\{\lVert \bm g^*_{k}\rVert^2 \}\to 0$ as $k\to\infty$, which contradicts \eqref{T2-0}.
\end{proof}

\begin{remark}\label{R1}
	Regarding Algorithm \ref{Alg2}, if the number of consecutive iterations with ${\rm \bm I}_k=0$ is large, the size of $\mathcal{G}_\varepsilon(\bm x_k)$ increases as $k$ grows (See Line 16 of Algorithm \ref{Alg2}). This issue may pose some difficulty with the size of the subproblem which is solved in Line 3 of the algorithm. In such situations, the user can optionally employ an  adaptive reset strategy to efficiently control the size of subproblems. Such a strategy has been proposed in Appendix \ref{Appendix}.
\end{remark}

\section{Computation of a Clarke stationary point}\label{Sec5}
For the given sequences $\{\delta_\nu\}\downarrow 0$ and $\{\varepsilon_\nu \}\downarrow 0$, the main aim of this section is to obtain a Clarke stationary point through a sequence of $(\delta_\nu, \mathcal{G}_{\varepsilon_\nu}(\bm x_{\nu+1}))$-stationary points. Algorithm \ref{Alg3} which has a simple structure generates such a sequence.

\begin{algorithm}
	\caption{Computation of a Clarke stationary point}
	\label{Alg3}
	\hspace*{\algorithmicindent}\textbf{Inputs:} Starting point $\bm x_0\in\mathbb{R}^n$, positive sequences $\{\delta_\nu\}\downarrow 0$ and $\{\varepsilon_\nu \}\downarrow 0$, and optimality tolerance $\eta>0$. \\
	\hspace*{\algorithmicindent}\textbf{Output:} A point $\bm x\in\mathbb{R}^n$ as an approximation of a Clarke stationary point.\\
	
	
	\begin{algorithmic}[1]
		\STATE{\textbf{Initialization:} Set $\nu:=0$ ;}
		\WHILE{ true }
		\STATE{Set $\bm x_{\nu+1}:=\,\texttt{DG-SP}\,(\bm x_\nu, \varepsilon_\nu, \delta_\nu)$ ;}
		\IF{$ \delta_\nu\leq \eta,$ \rm{\textbf{and}} $ \varepsilon_\nu\leq \eta,$}
		\RETURN {$\bm x_\nu$ as an approximation of a Clarke stationary point and \textbf{Stop} ; }
		\ENDIF
		\STATE{Set $\nu:=\nu+1$ ;}
		
		\ENDWHILE
	\end{algorithmic}
	
\end{algorithm}

In order to study the asymptotic behavior of Algorithm \ref{Alg3}, we assume $\eta=0$. Thus, the algorithm generates the infinite sequence $\{\bm x_\nu\}_\nu$. In the following theorem, we prove that any accumulation point of the sequence $\{\bm x_\nu\}_\nu$ is Clarke stationary for objective  $f$.

\begin{theorem}\label{T3}
	Suppose that  Assumption \ref{Assumption} holds and $lev_{f(\bm x_0)}(f)$ is bounded. If $\eta=0$ in Algorithm \ref{Alg3}, then any accumulation point of the sequence $\{\bm x_\nu\}_\nu$ generated by this algorithm is Clarke stationary for $f$.
\end{theorem}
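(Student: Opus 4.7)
The plan is to translate the asymptotic vanishing of $\delta_\nu$ and $\varepsilon_\nu$ into Clarke stationarity by invoking the outer semicontinuity of the Goldstein $\varepsilon$-subdifferential at $\varepsilon=0$.

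First I would check that when $\eta=0$ the sequence $\{\bm x_\nu\}$ is well-defined and bounded. Well-definedness follows from Theorem \ref{T2}, which guarantees that each call \texttt{DG-SP}$(\bm x_\nu,\varepsilon_\nu,\delta_\nu)$ terminates. Inspecting Algorithm \ref{Alg2}, the descent step ($\bm I_k=1$) decreases $f$ while the subgradient-enrichment step ($\bm I_k=0$) leaves $\bm x_{k+1}=\bm x_k$; therefore $f(\bm x_{\nu+1})\le f(\bm x_\nu)\le f(\bm x_0)$ for all $\nu$, so $\{\bm x_\nu\}\subset lev_{f(\bm x_0)}(f)$, which is bounded by hypothesis, and accumulation points exist.

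Next, let $\bm x^*$ be an accumulation point with $\bm x_{\nu_k}\to\bm x^*$ and, without loss of generality, $\nu_k\ge 1$. By construction $\bm x_{\nu_k}$ is a $(\delta_{\nu_k-1},\mathcal{G}_{\varepsilon_{\nu_k-1}}(\bm x_{\nu_k}))$-stationary point, so Definition \ref{Def1} furnishes $\bm g_k\in\texttt{conv}\,\mathcal{G}_{\varepsilon_{\nu_k-1}}(\bm x_{\nu_k})\subset\partial_{\varepsilon_{\nu_k-1}} f(\bm x_{\nu_k})$ with $\lVert\bm g_k\rVert\le\delta_{\nu_k-1}$. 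Since $\delta_{\nu_k-1}\downarrow 0$ we have $\bm g_k\to\bm 0$, while simultaneously $\bm x_{\nu_k}\to\bm x^*$ and $\varepsilon_{\nu_k-1}\downarrow 0$. The conclusion will then follow from the outer semicontinuity of $(\bm x,\varepsilon)\mapsto\partial_\varepsilon f(\bm x)$ at $(\bm x^*,0)$: with $\bm g_k\to\bm 0$ and $\bm g_k\in\partial_{\varepsilon_{\nu_k-1}} f(\bm x_{\nu_k})$, the limit $\bm 0$ must lie in $\partial_0 f(\bm x^*)=\partial f(\bm x^*)$, i.e., $\bm x^*$ is Clarke stationary.

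The main obstacle is the joint outer semicontinuity step, since Section \ref{Sec2} only records upper semicontinuity of $\partial_\varepsilon f$ in $\bm x$ for \emph{fixed} $\varepsilon$. The argument is short though: given $\eta>0$, upper semicontinuity of the Clarke subdifferential at $\bm x^*$ yields a neighborhood $U$ of $\bm x^*$ on which $\partial f(\bm y)\subset \partial f(\bm x^*)+\eta\,\mathcal{B}(\bm 0,1)$; for large $k$ one has $\mathcal{B}(\bm x_{\nu_k},\varepsilon_{\nu_k-1})\subset U$, and taking closed convex hulls on both sides (the right-hand side remains closed and convex because $\partial f(\bm x^*)$ is convex and compact) gives $\partial_{\varepsilon_{\nu_k-1}} f(\bm x_{\nu_k})\subset\partial f(\bm x^*)+\eta\,\mathcal{B}(\bm 0,1)$. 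Letting $k\to\infty$ and then $\eta\downarrow 0$ places $\bm 0$ in $\partial f(\bm x^*)$, completing the proof.
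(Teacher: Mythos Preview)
Your proof is correct and follows essentially the same route as the paper's: extract a near-zero element $\bm g_k\in\partial_{\varepsilon_{\nu_k-1}} f(\bm x_{\nu_k})$ from approximate stationarity and pass to the limit via outer semicontinuity of $(\bm x,\varepsilon)\mapsto\partial_\varepsilon f(\bm x)$ at $(\bm x^*,0)$. The paper simply invokes this joint upper semicontinuity as a known fact, whereas you supply the short argument for it (a minor point: the set $\partial f(\bm x^*)+\eta\,\mathcal{B}(\bm 0,1)$ is open, not closed, so the closed convex hull lands in its closure $\partial f(\bm x^*)+\eta\,\overline{\mathcal{B}(\bm 0,1)}$, which changes nothing in the final $\eta\downarrow 0$ step).
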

\begin{proof}
	For any $\nu\geq 0$, Algorithm \ref{Alg3} generates the $(\delta_\nu, \mathcal{G}_{\varepsilon_\nu}(\bm x_{\nu+1}))$-stationary point $\bm x_{\nu+1}$, i.e.,
	\begin{equation}\label{T3-1}
	\min\{\lVert \bm g\rVert \,\, : \,\, \bm g\in\texttt{conv} \mathcal{G}_{\varepsilon_\nu}(\bm x_{\nu+1}) \} \leq \delta_\nu, \quad \text{for all}\,\, \nu\geq 0.
	\end{equation}
	Since $\bm x_\nu\in lev_{f(\bm x_0)}(f)$, for all $\nu\geq 0$, boundedness of $lev_{f(\bm x_0)}(f)$ implies that the sequence $\{\bm x_\nu\}_\nu$ has at least one accumulation point, say $\bm x^*$. Thus, there exists $\mathcal{V}\subset\mathbb{N}_0$ such that $\bm x_\nu\xrightarrow{\mathcal{V}}\bm x^*$. Therefore, in view of \eqref{T3-1}, we have
	\begin{equation}\label{T3-2}
	\min\{\lVert \bm g\rVert \,\, : \,\, \bm g\in\texttt{conv} \mathcal{G}_{\varepsilon_\nu}(\bm x_{\nu+1}) \} \leq \delta_\nu, \quad \text{for all}\,\, \nu\in\mathcal{V}.
	\end{equation}
	Let $\omega>0$ be arbitrary. Since $\delta_\nu\downarrow 0$ as $\nu\to\infty$, there exists $\bar{\nu}\in\mathcal{V}$ sufficiently large such that $\delta_\nu<\omega$, for all $\nu\geq \bar{\nu}$. Then, it follows from \eqref{T3-2} that
	\begin{equation}\label{T3-3}
	\lVert \bm g^*_\nu\rVert:=\min\{\lVert \bm g\rVert \,\, : \,\, \bm g\in\texttt{conv} \mathcal{G}_{\varepsilon_\nu}(\bm x_{\nu+1}) \}< \omega, \quad \text{for all}\,\, \nu\geq\bar{\nu}, \,\, \nu\in\mathcal{V}.
	\end{equation}
	Therefore, the sequence $\{ \lVert \bm g^*_\nu\rVert\}_\nu$ is bounded, and without loss of generality, one may assume $\bm g^*_\nu\to \bm g^*$ as $\nu\xrightarrow{\mathcal{V}}\infty$. Now, the fact that $$\bm g^*_\nu\in\texttt{conv} \mathcal{G}_{\varepsilon_\nu}(\bm x_{\nu+1})\subset \partial_{\varepsilon_\nu} f(\bm x_{\nu+1})$$
	along with the upper semicontinuity of the map $\partial_\cdot f(\cdot)$ implies $\bm g^*\in\partial f(\bm x^*)$. Consequently
	$$ \min\{\lVert \bm g\rVert \,\, : \,\, \bm g\in \partial f(\bm x^*)\}\leq \omega. $$
	Since $\omega>0$ was arbitrary, we conclude $\bm 0\in \partial f(\bm x^*)$.
\end{proof}

\section{Numerical experiments}\label{Sec6}
In this section, we apply our method to a set of academic and semi-academic test problems and report the most important results. The proposed method is called \textbf{Subopt}. First, we consider a set of academic test problems and compare the efficiency of the proposed method with some well-known nonsmooth solvers. Next, several semi-academic problems are considered to show the applicability of the method in various contexts. The following experiments have been implemented in \textsc{Matlab} software (R2017b) on a machine with Intel Core i5 CPU 2.5 GHz and 6 GB RAM. Our choices for the parameters are as follows.

In the line search procedure of Algorithm \ref{Line-Search}, we set $\beta_1:=10^{-6}, \beta_2:=0.1, \bar{t}:=\varepsilon/2$, and the parameter $p$ is set to be $25$. To initialize the step length $t_i$, we set $t_0:=(\bar{t}+\varepsilon)/2$. In line 16 of Algorithm~\ref{Line-Search}, since $\zeta\in(0,0.5)$, one may choose ${t}_{i+1}$ as
$${t}_{i+1}:=\frac{t^u_{i+1}+t^l_{i+1}}{2}\in \left[ t^l_{i+1}+\zeta(t^u_{i+1}-t^l_{i+1}),\, t^u_{i+1}-\zeta(t^u_{i+1}-t^l_{i+1})   \right].$$
Regarding Algorithm \ref{Alg3}, the sequences $\{\delta_\nu \}\downarrow 0$ and $\{\varepsilon_\nu \}\downarrow 0$ were defined by $\delta_{\nu+1}:=0.5 \delta_\nu$ and $\varepsilon_{\nu+1}:=0.5\varepsilon_\nu$ with $\delta_0:=1$, $\varepsilon_0:=0.1.$

\subsection{Academic problems and alternative solvers}
Table \ref{Table1} provides a collection of nonsmooth convex and nonconvex test problems. The first five of these problems are convex and the rest are nonconvex. In this table, $f^*$ denotes a known (local) optimal value. Note that all of these academic test problems can be formulated with any number of variables.

\begin{table}[h]
	\centering
	\caption{List of test problems}\label{Table1}
	\resizebox{\textwidth}{!}{	
		\begin{tabular}{|lclllllcllcllcl|}
			\hline \rule{0pt}{3ex}
			&Problem &  &  & Name &  &  & Convex? &  &  & $f^*$ &  &  & Ref. &  \\
			\cline{2-2}\cline{5-5}\cline{8-8}\cline{11-11}\cline{14-14}
			& 1\rule{0pt}{3ex} &  &  & MAXL &  &  & Yes &  &  & 0 &  &  & \cite{techreport2} &  \\
			& 2 &  &  & L1HILB &  &  & Yes &  &  & 0 &  &  &  \cite{techreport2} &  \\
			& 3 &  &  & MAXQ &  &  & Yes &  &  & 0 &  &  & \cite{Limited-bundle} &  \\
			& 4 &  &  & MXHILB &  &  & Yes &  &  & 0 &  &  & \cite{Limited-bundle} &  \\
			& 5 &  &  & Chained CB3 II &  &  & Yes &  &  & $2(n-1)$ &  &  & \cite{Limited-bundle} &  \\
			& 6 &  &  & Active faces &  &  & No &  &  & 0 &  &  & \cite{phdthesis} &  \\
			& 7 &  &  & Brown function 2  &  &  & No &  &  & 0 &  &  & \cite{Limited-bundle} &  \\
			& 8 &  &  & Chained Mifflin 2 &  &  & No &  &  & varies &  &  & \cite{Limited-bundle} &  \\
			& 9 &  &  & Chained crescent I  &  &  & No &  &  & 0 &  &  & \cite{Limited-bundle} &  \\
			& 10 &  &  & Chained crescent II  &  &  & No &  &  & 0 &  &  & \cite{Limited-bundle} &  \\
			\hline
		\end{tabular}	
	}
	
\end{table}

Table \ref{Table2} describes the set of considered nonsmooth solvers. In this table, \textbf{GS} stands for the well-known original gradient sampling method, which is capable of minimizing both convex and nonconvex objectives \cite{Burke2005}.  \textbf{BTNC} is a variant of the well-known and efficient proximal Bundle-Trust (BT) method, which can solve both convex and nonconvex minimization problems \cite{BT-method}.  \textbf{SubG} is the classical subgradient method \cite{shorbook}. Although the convergence of this method was proved only for convex functions, there are empirical evidences to believe that it is able to minimize some types of nonconvex nonsmooth functions \cite{Bagirov2014}. Thus, we also applied this method to the considered set of nonconvex test problems using some  heuristic approaches to choose the off-line sequence of step length. We used the \textsc{Matlab} code of the \textbf{GS} algorithm which is freely available \cite{Burke2005}, and the other solvers have been implemented by the authors of this paper.

\begin{table}
	{
		\centering
		\caption{A list of nonsmooth solvers}\label{Table2}
		\resizebox{\textwidth}{!}{
			\begin{tabular}{|llllllcllcl|}
				\hline
				\multicolumn{2}{|c}{{\rule{0pt}{3ex}Name}} &  &   &  &  & \multicolumn{1}{l}{{Method}} &  &  & {Ref.} &  \\
				\cline{2-3}\cline{6-7}\cline{9-10}
				\multicolumn{1}{|c}{}\rule{0pt}{3ex} & \textbf{Subopt} &  &   &  &  & \multicolumn{1}{l}{Descent Subgradient} &  &  & {The current work} &  \\
				\multicolumn{1}{|c}{} & \textbf{GS} &  &   &  &  & \multicolumn{1}{l}{{Gradient Sampling}} &  &  & \cite{Burke2005} &  \\
				\multicolumn{1}{|c}{} & \textbf{BTNC} &  &   &  &  & \multicolumn{1}{l}{Proximal Bundle} &  &  & \cite{BT-method} &  \\
				\multicolumn{1}{|c}{} & \textbf{SubG} &  &   &  &  & \multicolumn{1}{l}{{Classical Subgradient}} &  &  & \cite{Bagirov2014} &  \\
				\hline		
			\end{tabular}
		}}
	\end{table}

	In this experiment, each problem was run using a single starting point randomly generated from $\mathcal{B}(\bm x_0, (\lVert \bm x_0\rVert+1)/n)$, where $\bm x_0$ was suggested in the literature. Moreover, we stopped an algorithm once the relative error
	$$E_k:=\frac{\lvert f(\bm x_k)-f^* \rvert}{\lvert f^* \rvert+1}$$
	drops below the prespecified tolerance $5\times10^{-4}$. We also limited the number of iterations to $10^4$.
	
	Figure \ref{Fig1} shows the performance profiles \cite{Dolan-More} based on the number of function and subgradient evaluations and elapsed CPU time, for $n=50$ and $100$.
	
	\begin{figure}
		\centering
		\subfloat{{\includegraphics[width=\textwidth]{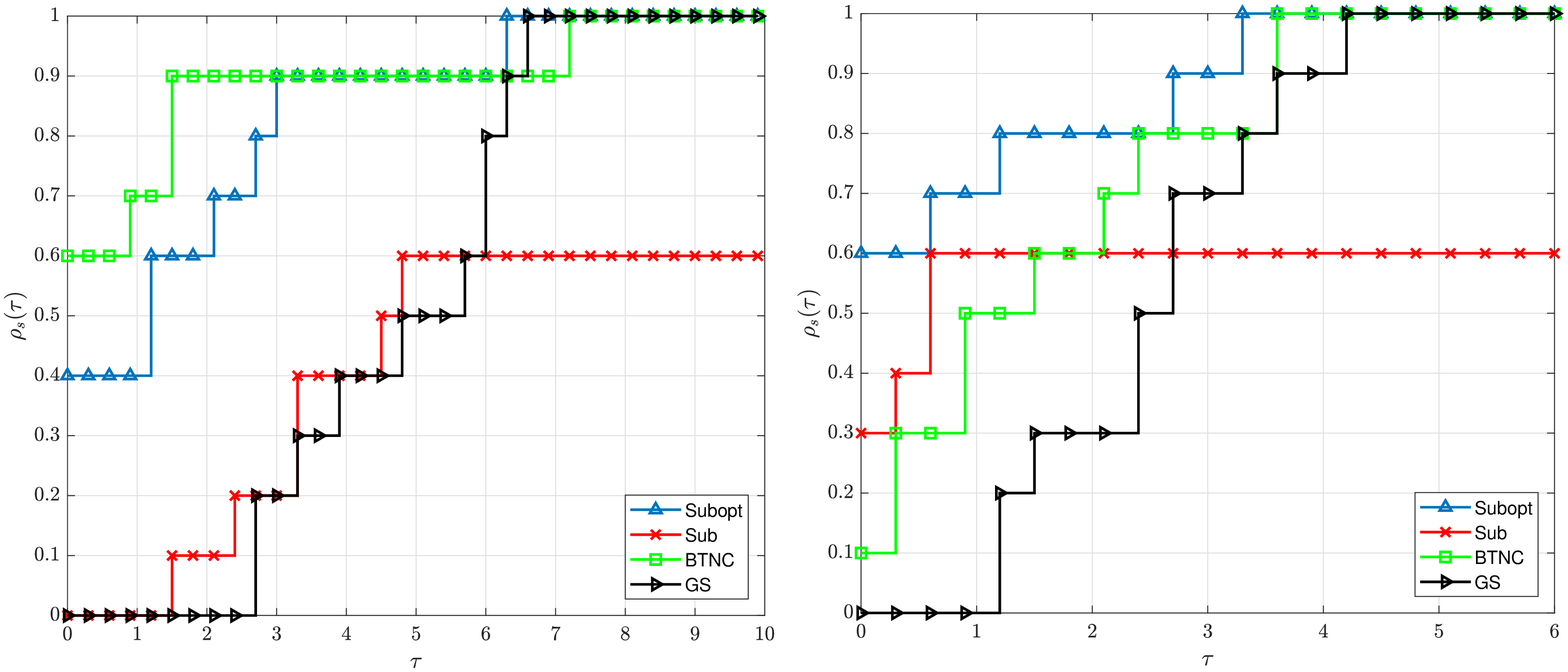} }}%
		
		\subfloat{{\includegraphics[width=\textwidth]{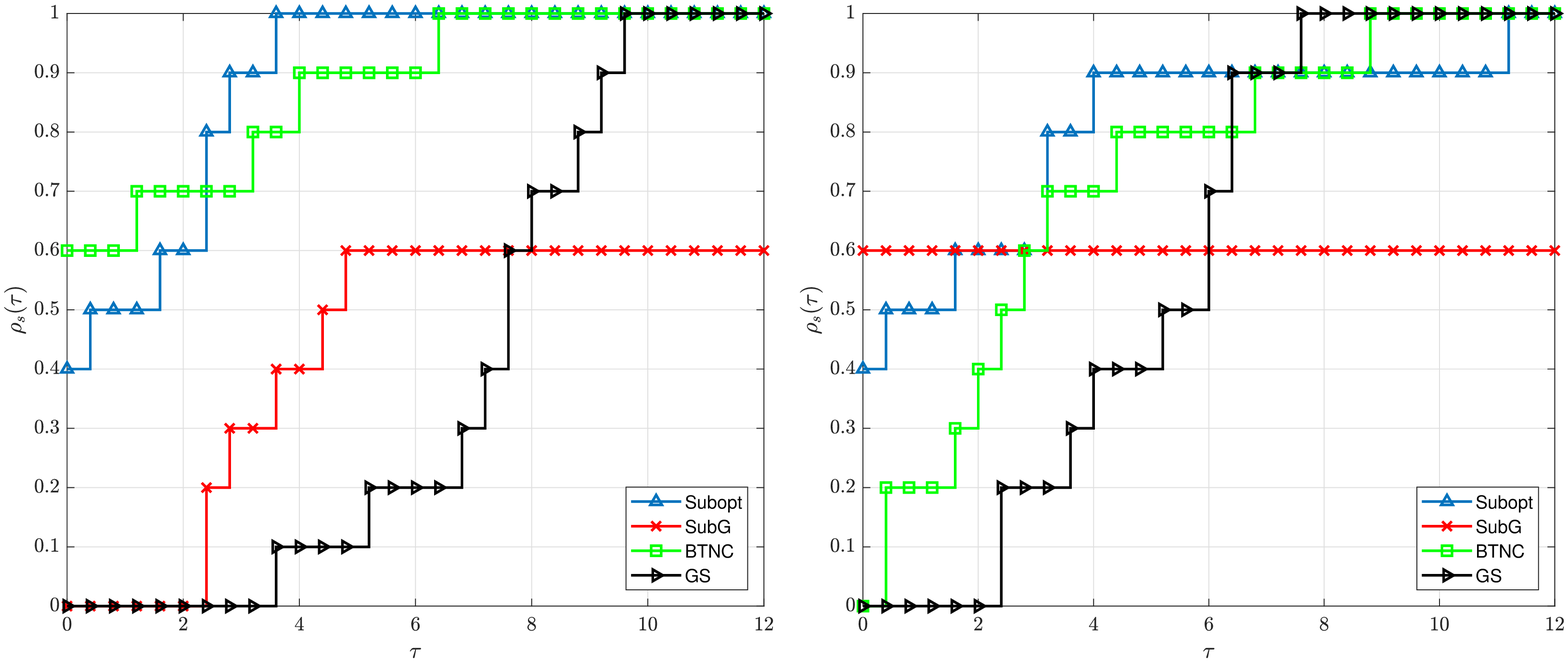} }}%
		\caption{Top: performance profiles based on the function and subgradient evaluations (Left) and elapsed CPU time (Right), for $n=50$. Bottom: the same for $n=100$.}
		\label{Fig1}
	\end{figure}
	
	As seen from Figure \ref{Fig1}, \textbf{Subopt}, \textbf{BTNC} and \textbf{GS} successfully reached the desired accuracy in all problems, while \textbf{SubG} has been successful in $60\%$ of problems with $n=50$ and $100$. In terms of function and subgradient evaluations, \textbf{BTNC} is superior to other solvers. Moreover, quite by a large margin, \textbf{GS} used more evaluations than the other solvers. This is due to
	the fact that this solver performs best when the size of the sample is set to $2n$. Furthermore, in majority of problems, \textbf{Subopt} consumed less CPU time than \textbf{BTNC}. This can be attributed to the fact that the structure of the quadratic subproblems in \textbf{Subopt} is simpler than \textbf{BTNC}. For $n=100$, although the \textbf{SubG} is the least robust solver, due to its simple structure, it consumed less CPU time than the other solvers in problems it successfully solved.
	
	\subsection{Data clustering}
	Assume $\mathscr{A}\subset\mathbb{R}^n$ is a finite set of data points, i.e.,
	$$\mathscr{A}=\{\bm a_1, \bm a_2, \ldots, \bm a_m  \}, \quad \text{where} \quad \bm a_i\in\mathbb{R}^n, \,\, i=1,\ldots,m.$$ For a given $\kappa\in\mathbb{N}$, our aim is to partition the set $\mathscr{A}$ into $\kappa$ subsets $\mathscr{A}_j, j=1,\ldots,\kappa$ such that
	\begin{itemize}
		\item [(i)] $\mathscr{A}_j\neq\emptyset, \quad j=1,\ldots,\kappa.$
		\item[(ii)] $\mathscr{A}_j\cap \mathscr{A}_{j'}=\emptyset,\quad j,j'=1,\ldots,\kappa, \, j\neq j'. $
		\item[(iii)] $\mathscr{A}=\cup_{j=1}^k \mathscr{A}_j.$
	\end{itemize}
	Such a problem is called \emph{hard clustering probelm} \cite{Bagirov2014}. Each cluster $\mathscr{A}_j$ is characterized by its center $\bm x_j$, $j=1,\ldots,\kappa$. Then, a data point $\bm a\in\mathscr{A}$ belongs to the cluster $\mathscr{A}_{\bar j}$ if
	$$\lVert \bm a-\bm x_{\bar j}\rVert=\min_{j=1,\ldots,\kappa} \, \lVert \bm a-\bm x_{j}\rVert, $$
	in which $\lVert \cdot \rVert$ is the squared Euclidean norm \cite{Bagirov2014}. The problem of finding the center points $\bm x_j, j=1,\ldots,\kappa,$ can be formulated by the following unconstrained nonsmooth optimization problem \cite{Bagirov2014}
	
	\begin{align}\label{Clustering_problem}
	\begin{split}
	&\min \, f_{\kappa}(\bm X)\\& \text{s.t.} \,\, \bm X=[\bm x_1, \bm x_2, \ldots, \bm x_{\kappa}]\in\mathbb{R}^{n\times\kappa},
	\end{split}
	\end{align}
	where
	$$f_{\kappa}(\bm X)= \frac{1}{m}\sum_{i=1}^{m}\min_{j=1,\ldots,\kappa} \lVert \bm a_i-\bm x_{j}\rVert. $$
	Note that, for any $\kappa>1$, this problem is nonsmooth and nonconvex. To create an instance of problem \eqref{Clustering_problem}, let $\mathscr{A}\subset\mathbb{R}^2$ be our data set containing ten thousands data points, i.e., $n=2$ and $m=10,000$. The top-left plot of Figure \ref{Fig2} depicts the data set $\mathscr{A}$. To solve this problem for different values of $\kappa$, we applied \textbf{Subopt} using a randomly generated starting point and optimality tolerance $\eta=10^{-8}$. The obtained results have been illustrated in Figure \ref{Fig2}, for $\kappa=2, 5, 10, 15$ and $20$.
	
	\begin{figure}
		\includegraphics[width=\textwidth]{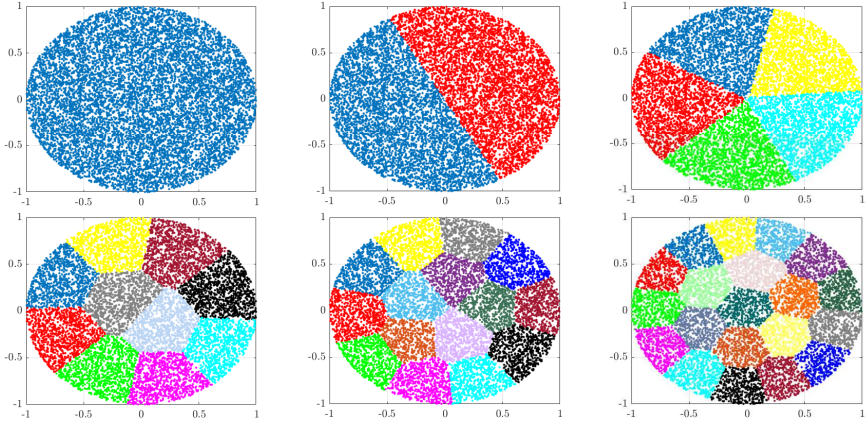}
		\caption{Illustration of the data set $\mathscr{A}$ (top-left) and clusters $\mathscr{A}_j$, for $\kappa=2$ (top-middle), $\kappa=5$ (top-right), $\kappa=10$ (bottom-left), $\kappa=15$ (bottom-middle), and $\kappa=20$ (bottom-right).}\label{Fig2}
	\end{figure}
	
	Table \ref{Table3} reports the computational cost of minimization of $f_\kappa(\bm X)$ for the considered values of $\kappa$. In this table, \#Fun and \#Sub denote the number of function and subgradient evaluations, respectively. Moreover, $f_\kappa(\bm X_{\rm end})$ is the value of the objective function of problem \eqref{Clustering_problem} at the last iteration.
	
	\begin{table}
		\centering
		\caption{Computational cost of minimization of $f_\kappa(\bm X)$ with $\eta=10^{-8}$. }\label{Table3}
		\resizebox{\textwidth}{!}{
			\begin{tabular}{|lcccllcccccccc|}
				\hline\rule{0pt}{3ex}
				& $\kappa$ &  & \#Fun &  &  & \#Sub &  &  & $f_{\kappa}(\bm X_{\rm end})$ &  &  & Time(s) &  \\
				\cline{2-2}\cline{4-4}\cline{7-7}\cline{10-10}\cline{13-13}
				& $2$\rule{0pt}{3ex} &  & 954 &  &  & 479 &  &  & 0.3212 &  &  & 51.23 &  \\
				& $5$ &  & 852 &  &  & 427 &  &  & 0.1249 &  &  & 42.12 &  \\
				& $10$ &  & 1200 &  &  & 600 &  &  & 0.0532 &  &  & 65.40 &  \\
				& $15$ &  & 633 &  &  & 317 &  &  & 0.0354 &  &  & 30.62 &  \\
				& $20$ &  & 1260 &  &  & 630 &  &  & 0.0260 &  &  & 62.43 &  \\
				\hline
			\end{tabular}
		}
	\end{table}
	
	\subsection{Chebyshev approximation by polynomials}
	Let $\mathcal{C}[a, b]$ be the space of real continuous functions on the closed interval $[a, b]$. One can equip this
	space with infinity norm, i.e., for each $f\in\mathcal{C}[a, b]$, define
	$$\lVert f\rVert_\infty:=\max \{\lvert f(x) \rvert \,\, : \,\, x\in[a,b] \}.$$
	Moreover, assume $\mathcal{P}_n$ is the space of polynomials of degree $m\leq n$. For a given $f\in\mathcal{C}[a, b]$ and $n\in\mathbb{N}_0$, our aim is to find $p^*\in\mathcal{P}_n$ which is the Chebyshev approximation (best uniform approximation) of $f$ in $\mathcal{P}_n$, i.e.,
	$$\min_{p\in\mathcal{P}_n} \lVert p-f\rVert_\infty=\lVert p^*-f\rVert_\infty. $$
	Let $\bm c=(c_n,\ldots, c_1, c_0)\in\mathbb{R}^{n+1}$ be the coefficients of a given polynomial $p\in\mathcal{P}_n$. Then, the above problem can be represented by
	\begin{equation}\label{Uniform-P}
	\min_{\bm c\in\mathbb{R}^{n+1}}\, h(\bm c),
	\end{equation}
	where
	$$h(\bm c):=\max_{x\in [a,b]}\, \lvert c_nx^n+\ldots+c_1x+c_0-f(x)\rvert.$$
	It is noted that problem \eqref{Uniform-P} is a nonsmooth convex minimization problem. To evaluate $h$ at a given $\bm c\in\mathbb{R}^{n+1}$, we have to solve a one-dimensional maximization problem. For this purpose,  we create a one-dimensional grid of the interval $[a,b]$ containing $2,000$ grid points. Next, we evaluate $\lvert c_nx^n+\ldots+c_1x+c_0-f(x)\rvert$ on the grid points and find the maximum value. The grid point at which the maximum occurs is used to initialize a local maximization method, which is based on the golden section and hyperbolic interpolation methods. Now consider $f(x):=\sin(2x)$ on the interval $[-\pi, \pi]$. We applied \textbf{Subopt} to find the Chebyshev approximation of $f$ in $\mathcal{P}_0$, $\mathcal{P}_1$, $\mathcal{P}_2$, and $\mathcal{P}_3$ using a randomly generated starting point and optimality tolerance $\eta=10^{-8}$.
	\begin{table}
		
		\caption{Results for the Chebyshev approximation of $\sin(2x)$  in $\mathcal{P}_0$, $\mathcal{P}_1$, $\mathcal{P}_2$, and $\mathcal{P}_3$ with  $\eta=10^{-8}$. }\label{Table4}
		\resizebox{\textwidth}{!}{

			\begin{tabular}{|lccclccccccccccllcl|}
				\hline \rule{0pt}{3ex}
				& $\mathcal{P}_n$ &  & $c^*_0$ &  & $c^*_1$ &  & $c^*_2$ &  & $c^*_3$ &  & \#Fun &  & \#Sub &  & $h_{\rm end}$ &  & Time(s) &  \\
				\cline{2-2}\cline{4-4}\cline{6-6}\cline{8-8}\cline{10-10}\cline{12-12}\cline{14-14}\cline{16-16}\cline{18-18}
				& $\mathcal{P}_0$\rule{0pt}{3ex} &  & -8.5197E-10 &  & - &  & - &  & - &  & 856 &  & 440 &  & 1.0000 &  & 5.57 &  \\
				& $\mathcal{P}_1$ &  & -6.3138E-11 &  & 2.4467E-9 &  & - &  & - &  & 1335 &  & 681 &  & 1.0000 &  & 9.21 &  \\
				& $\mathcal{P}_2$ &  & 1.0527E-10 &  & 1.0527E-10 &  & -1.0964E-9 &  & - &  & 1204 &  & 618 &  & 1.0000 &  & 6.64 &  \\
				& $\mathcal{P}_3$ &  & 2.1203E-10 &  & 0.1923 &  & 6.1199E-10 &  & -0.0472 &  & 3016 &  & 1463 &  & 0.8723 &  & 25.14 &  \\
				\hline
			\end{tabular}
		}	
		
	\end{table}
	
	The obtained results have been reported in Table \ref{Table4}. Based on these results, the Chebyshev approximation of $f$ in $\mathcal{P}_0$, $\mathcal{P}_1$, and $\mathcal{P}_2$ is the constant polynomial $p^*_1(x)\equiv0$, while the Chebyshev approximation of $f$ in $\mathcal{P}_3$ is
	$p^*_2(x)=-0.0472x^3+0.1923x$. The following \emph{alternation} theorem helps us to confirm the  results obtained by \textbf{Subopt}.

	\begin{theorem}[\cite{Poly-App}]\label{Alt-Theorem}
		$p^*\in\mathcal{P}_n$ is the best uniform approximation to $f\in C[a,b]$ if and only if the error function $e(x):=p^*(x)-f(x)$ takes consecutively the extremum value
		$\lVert p^*-f\lVert_\infty$ on $[a,b]$ with alternating signs at least $(n + 2)$ times.
	\end{theorem}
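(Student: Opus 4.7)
The plan is to prove the two implications of this classical Chebyshev alternation theorem separately, exploiting the key structural fact that a nonzero polynomial of degree at most $n$ has at most $n$ roots.

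For sufficiency ($\Leftarrow$), I would argue by contradiction. Assume there exist points $a\le x_0<x_1<\cdots<x_{n+1}\le b$ at which $e(x_i)=(-1)^i\sigma E$ for some sign $\sigma\in\{-1,+1\}$, with $E:=\lVert p^*-f\rVert_\infty$, and suppose some $q\in\mathcal{P}_n$ satisfies $\lVert q-f\rVert_\infty<E$. Consider the polynomial $r:=p^*-q\in\mathcal{P}_n$. At each $x_i$, $r(x_i)=(p^*(x_i)-f(x_i))-(q(x_i)-f(x_i))$; the first term has magnitude exactly $E$ and alternating signs, while the second is bounded by $\lVert q-f\rVert_\infty<E$. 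Hence $\operatorname{sign}(r(x_i))=\operatorname{sign}(e(x_i))$, so $r$ alternates sign at $n+2$ consecutive points. By the intermediate value theorem $r$ has at least $n+1$ zeros in $[a,b]$, forcing $r\equiv 0$ because $\deg r\le n$. But then $p^*=q$ contradicts $|r(x_0)|\ge E-\lVert q-f\rVert_\infty>0$.

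For necessity ($\Rightarrow$), assume $p^*$ is a best approximation but that the number of alternations of $e$ at value $\pm E$ is at most $n+1$. I would group the extrema of $e$ (where $|e|=E$) into maximal consecutive blocks of constant sign; by hypothesis there are at most $n+1$ such blocks, separated by at most $n$ interior ``switch'' points $y_1<y_2<\cdots<y_m$ with $m\le n$, chosen in the gaps between opposite-sign blocks. Define the correction
\begin{equation*}
q(x):=\sigma\prod_{j=1}^m(x-y_j)\in\mathcal{P}_n,
\end{equation*}
with $\sigma\in\{-1,+1\}$ fixed so that $q(x)$ has the same sign as $e(x)$ at every extremum. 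By a compactness/continuity argument, there exists $\delta>0$ such that $\operatorname{sign}(q(x))=\operatorname{sign}(e(x))$ on a neighborhood $U$ of the set $\{x:|e(x)|\ge E-\delta\}$, while $|e(x)|\le E-\delta$ off $U$. For sufficiently small $\epsilon>0$, the candidate $\tilde p:=p^*-\epsilon q\in\mathcal{P}_n$ satisfies $\lVert\tilde p-f\rVert_\infty<E$: on $U$ the correction strictly reduces $|e|$ because it points opposite to the error sign, and off $U$ the perturbation is dominated by the slack $\delta$ once $\epsilon\lVert q\rVert_\infty<\delta$. This contradicts the optimality of $p^*$.

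The routine direction is sufficiency, whose proof is a direct counting argument. The main obstacle is the necessity direction: one must be careful both in locating the switch points $y_j$ (they must lie strictly between blocks of opposite-sign extrema, not at extrema themselves) and in choosing a single $\epsilon$ that works uniformly on $[a,b]$. A clean way to handle the uniformity is to exploit that $\{x:|e(x)|\ge E-\delta\}$ is a compact set on which $q\cdot e>0$, so $qe$ has a positive lower bound there; picking $\epsilon$ smaller than both this bound divided by $\lVert q\rVert_\infty^2$ and $\delta/\lVert q\rVert_\infty$ suffices.
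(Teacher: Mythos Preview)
The paper does not supply a proof of this theorem; it is stated as a classical result cited from \cite{Poly-App} and invoked only to validate the numerical output in Section~\ref{Sec6}. There is therefore no in-paper argument to compare your proposal against.

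Your proposal is the standard textbook proof of the Chebyshev alternation theorem and is correct. The sufficiency direction is complete as written. For necessity, the outline is right; the only places that would need fleshing out in a full write-up are precisely the two you already flag: the placement of the switch points $y_j$ strictly between opposite-sign extremal blocks (possible because the continuous error $e$ must vanish somewhere between a $+E$ extremum and a $-E$ extremum, so $|e|<E$ on an open subinterval there), and the uniform choice of $\epsilon$, which your compactness remark handles. A clean way to record the estimate on $U$ is $|e-\epsilon q|^2=e^2-2\epsilon\,eq+\epsilon^2q^2\le E^2-2\epsilon c+\epsilon^2\lVert q\rVert_\infty^2<E^2$ once $\epsilon<2c/\lVert q\rVert_\infty^2$. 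The degenerate case $E=0$ (i.e.\ $f\in\mathcal{P}_n$) is trivial and can be dispatched in one line.
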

	
	Figure \ref{Fig3} indicates the error functions $e_1(x):=p^*_1(x)-\sin(2x)$ and $e_2(x):=p^*_2(x)-\sin(2x)$  on interval $[-\pi, \pi]$. As can be seen, $e_1(x)$ takes consecutively the extremum value $\lVert p^*_1-f\rVert_\infty=1$ with alternating sings at four points. Also, $e_2(x)$ takes consecutively the extremum value $\lVert p^*_2-f\rVert_\infty=0.8723$ with alternating sings at six points satisfying the requirements of Theorem \ref{Alt-Theorem}.

	\begin{figure}
		\includegraphics[width=\textwidth]{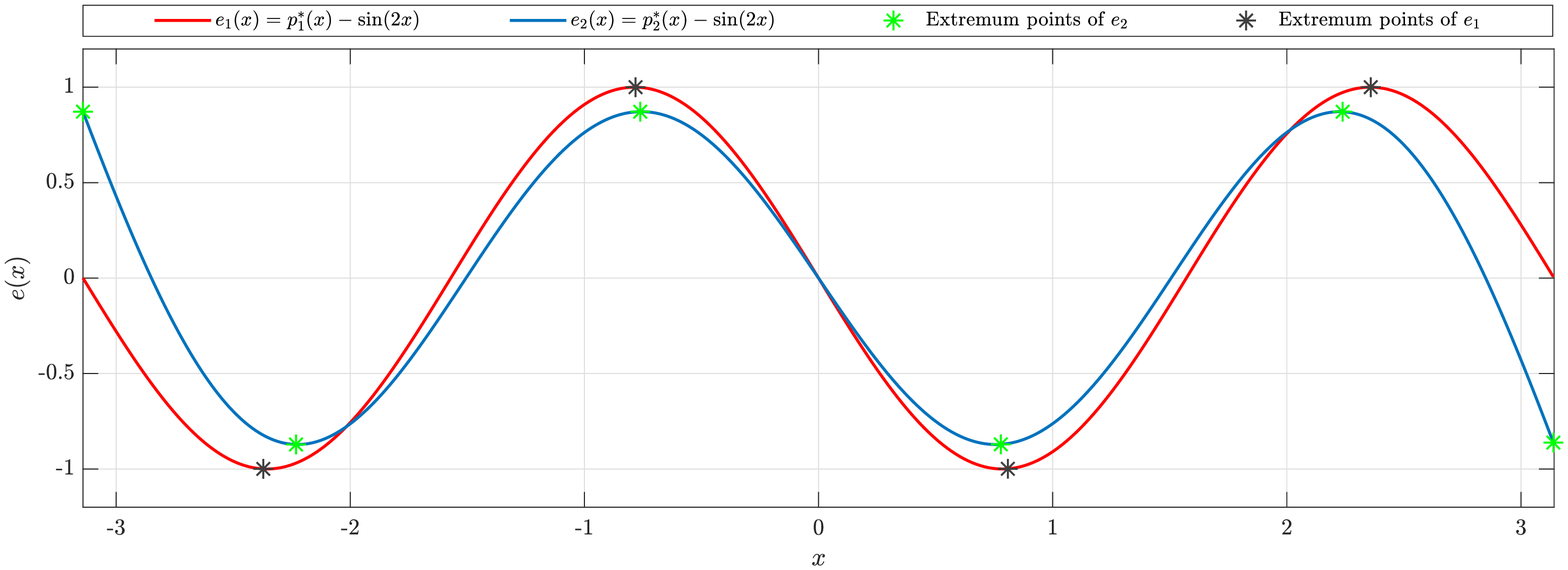}
		\caption{Error functions $e_1(x)$ and $e_2(x)$ on the interval $[-\pi, \pi]$.}\label{Fig3}
	\end{figure}

	\subsection{Minimization of eigenvalue products}
	
	For a positive semidefinite matrix $\bm A\in\mathbb{R}^{N\times N}$, 	we consider the problem
	\begin{align}\label{Prod}
	\begin{split}
	&\min_{\bm X\in\mathbb{R}^{N\times N}} \,\, \prod_{j=1}^{s} \lambda_j (\bm A \circ \bm X) \\&
	\text{s.t.} \,\,\, \bm X \succeq 0, \,\,\, \bm X_{i,i}=1, \quad i=1,\ldots,N,
	\end{split}
	\end{align}
	where $\bm A \circ \bm X$ is the componentwise product of matrices $\bm A$ and $\bm X$, and $\lambda_j (\bm A \circ \bm X)$ denotes the $j$-th largest eigenvalue of $\bm A \circ \bm X$. Problem  \eqref{Prod} was first presented in \cite{Burke2005} and solved by the original gradient sampling method. This problem is nonconvex, and its objective function is differentiable at $\bm X$ when $\bm X$ is positive definite and   $\lambda_s (\bm A \circ \bm X)> \lambda_{s+1} (\bm A \circ \bm X)$. Indeed, it has been shown in \cite{BFGS-Article} that the objective function of the problem is partly smooth. Alternatively, one can consider the following equivalent form of problem \eqref{Prod} as follows
	\begin{align}\label{Prod1}
	\begin{split}
	&\min_{\bm x\in\mathbb{R}^{n}} \,\, \prod_{j=1}^{s} \lambda_j (\bm A \circ Mat(\bm x)) \\&
	\text{s.t.} \,\,\, Mat(\bm x) \succeq 0,
	\end{split}
	\end{align}
	in which $n:=N(N-1)/2$ and
	$$Mat(\bm x):=\begin{bmatrix}
	1     & x_1    & x_2      & \ldots& x_{N-1} \\
	x_1   & 1      & x_N      &  \ldots& x_{2N-3}    \\
	x_2   & x_N    & 1        & \ldots & x_{3N-6} \\
	\vdots& \vdots & \vdots   & \ddots & \vdots\\
	x_{N-1}&x_{2N-3}& x_{3N-6}& \ldots  & 1\\
	\end{bmatrix} \in\mathbb{R}^{N\times N}
	.$$
	To handle the constraint of problem \eqref{Prod1}, following \cite{Burke2005}, we employ an exact penalty function, which leads to the following unconstrained minimization problem:
	\begin{equation}\label{Eigen-P}
	\min_{\bm x\in\mathbb{R}^{n}} \,\, \prod_{j=1}^{s} \lambda_j (\bm A \circ Mat(\bm x))-\mu\min\{0,\lambda_N(Mat(\bm x)) \}.
	\end{equation}
	In order to generate various instances of this problem, as suggested in \cite{Burke2005}, the matrices $\bm A$ are considered as the leading $N\times N$ submatrices of a $63\times63$ covariance matrix arising in an environmental application, which is freely available.
	
	\begin{table}
		\centering
		\caption{Results for minimization of eigenvalue products for various instances with $\eta=10^{-5}$.}\label{Table5}
		\resizebox{\textwidth}{!}{
			\begin{tabular}{|lcccccccccccccccccccc|}
				\hline\rule{0pt}{3ex}
				& $n$ &  &  & $N$ &  &  & $s$ &  &  & \#Fun &  &  & \#Sub &  &  & $f_{\rm end}$ &  &  & Time(s) &  \\
				\cline{2-2}\cline{5-5}\cline{8-8}\cline{11-11}\cline{14-14}\cline{17-17}\cline{20-20}
				& 1\rule{0pt}{3ex} &  &  & 2 &  &  & 1 &  &  & 248 &  &  & 130 &  &  & 0.4566 &  &  & 1.15 &  \\
				& 6 &  &  & 4 &  &  & 2 &  &  & 313 &  &  & 164 &  &  & 0.1580 &  &  & 1.37 &  \\
				& 15 &  &  & 6 &  &  & 3 &  &  & 3386 &  &  & 1690 &  &  & 0.0726 &  &  & 12.01 &  \\
				& 28 &  &  & 8 &  &  & 4 &  &  & 3055 &  &  & 1524 &  &  & 0.0327 &  &  & 10.25 &  \\
				& 45 &  &  & 10 &  &  & 5 &  &  & 6652 &  &  & 3326 &  &  & 0.0265 &  &  & 26.47 &  \\
				& 66 &  &  & 12 &  &  & 6 &  &  & 3146 &  &  & 1576 &  &  & 0.0106 &  &  & 16.40 &  \\
				& 91 &  &  & 14 &  &  & 7 &  &  & 3179 &  &  & 1547 &  &  & 0.0049 &  &  & 21.53 &  \\
				& 120 &  &  & 16 &  &  & 8 &  &  & 4053 &  &  & 2029 &  &  & 0.0035 &  &  & 44.71 &  \\
				\hline
			\end{tabular}
		}
	\end{table}
	
	Table \ref{Table5} shows the results obtained by \textbf{Subopt} when applied to different instances of the problem using  $\mu=100$, a randomly generated starting point, and optimality tolerance $\eta=10^{-5}$. In this table, $f_{\rm end}$ is the value of the objective function of problem \eqref{Eigen-P} at the last iteration.
	Comparing the results with those obtained by the original GS method, one can see that \textbf{Subopt} has provided better approximations of optimal values for different instances of the problem using optimality tolerance $\eta=10^{-5}$ (See Table 2 in \cite{Burke2005}).
	
	\section{Conclusion}\label{Sec7}
	We have developed a descent subgradient method for solving an unconstrained nonsmooth nonconvex optimization problem. To find an efficient descent direction, we proposed an iterative procedure to provide an inner approximation of the Goldstein $\varepsilon$-subdifferential. The least norm element of this approximation was considered as a search direction, and a new variant of Mifflin's line search was suggested for finding the next trial point. The finite convergence of the presented line search has been proved under the assumption that the objective function satisfies some semismoothness assumption. We  studied the global convergence of the method to a Clarke stationary point. Our numerical tests have confirmed that, the proposed subgradient algorithm is a big rival to GS and bundle type methods. Moreover, some semi-academic problems showed the applicability of the method in a variety of contexts.

%

\section*{Disclosure statement}

The authors report there are no competing interests to declare.

\section*{Funding}

This project was fully supported by the Iran National Science Foundation (INSF) under contract no. 99025023.

%
%
%
%
%
%

%
%

\bibliographystyle{tfnlm}
\bibliography{Maleknia}

\appendix
\section{ Reset strategy}\label{Appendix}
In this appendix, as stated in Remark \ref{R1}, we propose a reset strategy to efficiently control the size of quadratic subproblems in Algorithm \ref{Alg2}. Let $2<M\in\mathbb{N}$ be an upper bound for the number of elements in $\mathcal{G}_\varepsilon(\bm x_k)$. Once $\lvert \mathcal{G}_\varepsilon(\bm x_k) \rvert=M$ for some $k$, we adaptively discard almost inactive subgradients from $\mathcal{G}_\varepsilon(\bm x_k)$. To this end, assume we are at the $\bar k$-th iteration of Algorithm~\ref{Alg2} and
$$  \mathcal{G}_\varepsilon(\bm x_{\bar k})=\{\boldsymbol{\chi}_1,\boldsymbol{\chi}_2,\ldots,\boldsymbol{\chi}_{M-1}  \} \quad \text{and} \quad {\rm \bm I}_{\bar k}=0.  $$
Since ${\rm \bm I}_{\bar k}=0$, by construction of Algorithm \ref{Alg2}, we conclude $\lvert \mathcal{G}_\varepsilon(\bm x_{\bar k+1}) \rvert=M$. Thus, at this iteration, we eliminate some elements of $\mathcal{G}_\varepsilon(\bm x_{\bar k})$. Note that
$$\bm g^*_{\bar k}=\sum_{j=1}^{M-1} \lambda_j \boldsymbol{\chi}_j,$$
in which $\sum_{j=1}^{M-1} \lambda_j=1$ and $\lambda_j\geq 0$, for  all $j=1,\ldots, M-1$. Assume that $\lambda_{[j]}$ is the $j$-th largest element of $\boldsymbol{\lambda}=(\lambda_1,\lambda_2,\ldots,\lambda_{M-1})$. Now, for a prespecified weight $\theta\in(0,1]$, let $l\in\mathbb{N}$ be the smallest positive integer satisfying  $\sum_{j=1}^{l} \lambda_{[j]}\geq\theta$. Then, one can reduce $  \mathcal{G}_\varepsilon(\bm x_{\bar k})$ as
\begin{equation}\label{A1}
\mathcal{G}_\varepsilon(\bm x_{\bar k}) \leftarrow \{ \lambda_{[j]}: j=1,\ldots, l \}\cup \{ \bm g^*_{\bar k}\}.
\end{equation}
Consequently, the most active subgradients in forming $\bm g^*_{\bar k}$ are preserved and th rest are discarded. Moreover, due to a technical concern, the least norm element of $\texttt{conv} \mathcal{G}_\varepsilon(\bm x_{\bar k})$, namely $\bm g^*_{\bar k}$, has been appended to $\mathcal{G}_\varepsilon(\bm x_{\bar k})$.    The user should apply the elimination process \eqref{A1} right before Line 16 of Algorithm~\ref{Alg2} using the following conditional block:
\begin{align*}
`` &\textbf{if} \,\, \lvert \mathcal{G}_\varepsilon(\bm x_{k})\rvert=M-1 \,\, \textbf{then} \\&
\text{reduce } \mathcal{G}_\varepsilon(\bm x_{k})\, \text{as described in \eqref{A1}} ; \\&
\textbf{end if} \, "
\end{align*}
Eventually, we have to emphasize that this modification to Algorithm~\ref{Alg2} does not impair our proof of Theorem \eqref{T2}, because it is easy to see that
$$\boldsymbol{\xi}_{k+1}, \bm g^*_{k}\in \texttt{conv} \mathcal{G}_\varepsilon(\bm x_{k+1}), \quad \text{for all}\, k. $$
This explains why we have appended  $\bm g^*_{\bar k}$ to $\mathcal{G}_\varepsilon(\bm x_{\bar k})$ in \eqref{A1}.

\end{document}